\def\siam{0} 
\ifnum\siam=1 
    \documentclass[review,onefignum,onetabnum]{siamart171218}
    
    \newsiamremark{remark}{Remark}
    \newsiamremark{hypothesis}{Hypothesis}
    \crefname{hypothesis}{Hypothesis}{Hypotheses}
    \newsiamthm{claim}{Claim}

    \headers{Linear poroelasticity with solid incompressibility}{Nicolas A. Barnafi, Andrés E. Rubiano, Ricardo Ruiz-Baier}
    \makeatletter
    \newcommand*{\addFileDependency}[1]{
      \typeout{(#1)}
      \@addtofilelist{#1}
      \IfFileExists{#1}{}{\typeout{No file #1.}}
    }
    \makeatother

    \usepackage{mathpazo}
    \usepackage{amssymb,amsfonts}

\else
    \documentclass{article}
    \usepackage{amsthm}
    \usepackage{fullpage} 
    \usepackage{mathpazo}
    \usepackage{amssymb,amsfonts,amsmath}
    \newtheorem{remark}{Remark}[section]
    \newtheorem{theorem}{Theorem}[section]
    \newtheorem{lemma}[theorem]{Lemma}

    \usepackage[colorlinks=true,breaklinks=true,linkcolor=lightblue,citecolor=lightblue]{hyperref} 
    \allowdisplaybreaks

    \numberwithin{equation}{section}
    \numberwithin{figure}{section}
    \numberwithin{table}{section}
\fi

\usepackage[T1]{fontenc}
\usepackage{graphicx,subcaption} 
\usepackage{isomath}
\usepackage{booktabs,todonotes,pgfplots,pgfplotstable}
\usepackage{mathtools} 
\usepackage[nolist]{acronym}

\renewcommand*{\vec}{\vectorsym}
\newcommand{\ten}{\tensorsym}

\DeclareMathOperator{\dive}{\mathrm{div}}
\DeclareMathOperator{\grad}{\nabla}
\DeclareMathOperator{\cof}{\mathrm{cof}}

\newcommand{\parder}[2]{\frac{\partial #1}{\partial #2}}

\newcommand\rH{\mathrm{H}}
\newcommand\rL{\mathrm{L}}
\newcommand\rV{\mathrm{V}}
\newcommand\rQ{\mathrm{Q}}
\newcommand\rP{\mathrm{P}}

\newcommand\bH{\mathbf{H}}
\newcommand\bL{\mathbf{L}}
\newcommand\bV{\mathbf{V}}
\newcommand\bP{\mathbf{P}}

\newcommand\cA{\mathcal{A}}
\newcommand\cB{\mathcal{B}}
\newcommand\cC{\mathcal{C}}
\newcommand\cD{\mathcal{D}}
\newcommand\cF{\mathcal{F}}
\newcommand\cG{\mathcal{G}}
\newcommand\cH{\mathcal{H}}
\newcommand\cI{\mathcal{I}}

\usepackage{listings}
\usepackage{xcolor}
\definecolor{codegreen}{rgb}{0,0.6,0}
\definecolor{codegray}{rgb}{0.5,0.5,0.5}
\definecolor{codepurple}{rgb}{0.58,0,0.82}
\definecolor{backcolour}{rgb}{0.95,0.95,0.92}
\definecolor{lightgreen}{rgb}{0.22,0.50,0.25}
\definecolor{lightblue}{rgb}{0.22,0.45,0.70}
\lstdefinestyle{mystyle}{
	backgroundcolor=\color{backcolour}, commentstyle=\color{codegreen},
	keywordstyle=\color{magenta},
	numberstyle=\tiny\color{codegray},
	stringstyle=\color{codepurple},
	basicstyle=\ttfamily\footnotesize,
	breakatwhitespace=false,         
	captionpos=b,                    
	keepspaces=true,                 
	numbers=none,                    
	numbersep=5pt,                  
	showspaces=false,                
	showstringspaces=false,
	showtabs=false,                  
	tabsize=2,
	framerule=1.5pt,
	rulecolor=\color{red!60!black},
	float
}
\title{Linear poroelasticity with solid incompressibility: consistent formulation and scalable numerical solution\thanks{This work has been supported by Center for Mathematical Modeling (CMM), Proyecto Basal FB210005, by FONDECYT Postdoctoral Grant 3230326,  by the Australian Research Council through the Future Fellowship grant FT220100496 and Discovery Project grant DP22010316, and by the Center of Advanced Study (CAS) at the Norwegian Academy of Science and Letters under the program \textit{Mathematical Challenges in Brain Mechanics}.}}
\author{Nicolás A. Barnafi$^{\dag,}$\thanks{Corresponding author.}\thanks{Instituto de Ingeniería Matemática y Computacional \& Facultad de Ciencias Biológicas, Pontificia Universidad Católica de Chile, Santiago, Chile; Centro de Modelamiento Matemático, Santiago, Chile. Email: nicolas.barnafi@uc.cl} \and Andrés E. Rubiano\thanks{School of Mathematics, Monash University, 9 Rainforest Walk, Clayton 3800 VIC,  Australia. Email: andres.rubianomartinez@monash.edu.} \and Ricardo Ruiz-Baier\thanks{School of Mathematics, Monash University, 9 Rainforest Walk, Clayton 3800 VIC,  Australia; and Universidad Adventista de Chile, Casilla 7-D, Chill\'an, Chile. Email: ricardo.ruizbaier@monash.edu.}}

\begin{document}

\maketitle
\begin{abstract}
    In this work we propose, by linearizing the equations of fully nonlinear poroelasticity, a consistent model in which only the solid phase is incompressible. This reformulation circumvents some inconsistency issues encountered in standard primal formulations of nonlinear poroelasticity while still retaining its key physical coupling mechanisms. We show a well-posed and consistent discretization strategy and also formulate scalable solvers based on a Schur complement formalism. A distinctive feature of the model is that it allows for a lowest order, inf-sup stable family of \ac{fe} spaces. Numerical tests in two and three dimensions are provided to validate the proposed method and solver framework.
\end{abstract}

\smallskip
\noindent\textbf{Keywords and phrases:} Flow in deformable porous media, finite element methods, stability analysis, scalable preconditioning. 

\noindent\textbf{Mathematics Subject Classifications (2020):} 65M12, 65F08, 74F20, 76S05. 

\begin{acronym}[GPME]
    \acro{amg}[AMG]{Algebraic Multigrid}
    \acro{dofs}[DoFs]{Degrees of Freedom}
    \acro{fe}[FE]{Finite Elements}
    \acro{gpme}[GPME]{Generalized Porous Media Equation}
    \acro{mpgpme}[MP-GPME]{Mixed-in-Pressure Generalized Porous Media Equation}
    \acro{pde}[PDE]{Partial Differential Equation}
    \acrodefplural{pde}[PDEs]{Partial Differential Equation}
\end{acronym}

\section{Introduction}
\paragraph{Context and main contributions.} Poroelastic media refer to a deformable elastic body with a complex network through which fluid passes. This framework has a wide variety of applications, such as CO2 sequestration, hydrogels, and soft tissue \cite{bear1987theory}. The latter is the main motivation of this work, with the additional hypothesis that the solid phase is incompressible \cite{Coussy2004}, as soft tissues are made mostly of water. This hypothesis is seldom used in both linear and nonlinear settings despite its clear interpretation, and instead other options are typically adopted, such as total incompressibility ($J=1$) or quasi-incompressibility \cite{chapelle2010poroelastic,patte2022quasi}. We highlight works on edema modeling \cite{barnafi2022edema}, 
scale and feather growth \cite{barnafi2022coupling}, and heart perfusion \cite{lee2016silico} where solid incompressibility was considered.

The equations of nonlinear poroelasticity, albeit being very general, have remained understudied as they pose several practical difficulties, mainly lack of convergence of the nonlinear solvers involved. One fundamental difficulty was recently detected \cite{barnafi2024fully}, where primal formulations of nonlinear poroelasticity were found to be always inconsistent if the pressure depends on deformation, which is one of the defining physical features of poroelasticity. Two novel mixed formulations were proposed in \cite{barnafi2024fully}, which avoid this problem, and in this work we will focus on the simpler one, consisting in defining a total pressure variable. 

Total pressure has already been considered as a variable in related linearized models \cite{ruiz2022biot}, and has proved to be a reliable reformulation of the problem with much better theoretical properties. It has been used for developing parameter robust methods that do not require an interface Lagrange multiplier \cite{boon2022parameter}, to study nonlinear problems \cite{boon2022parameter,cesmelioglu2023hybridizable}, and to study well-posedness of novel discretization techniques \cite{cesmelioglu2023hybridizable,kumar2024numerical}. Our formulation, despite being different as it does not consider pressure as a variable, depends on a similar idea in which a total pressure is defined as an auxiliary variable. 

In this work, we present a linearization of the fully nonlinear poroelasticity equations with solid incompressibility and a modified total pressure variable. The mathematical structure of the resulting model allows for a simple lowest order \ac{fe} approximation, and we leverage the well-established fixed-stress preconditioner \cite{white2016block,castelletto2016scalable} to obtain a scalable solver for the proposed model.

\paragraph{Notation.} Given $s \geq 0$, the Hilbert space of $\rL^2$ scalar (resp. $\bL^2$ vector) functions with $\rL^2$ derivatives up to order $s$ on a domain $\Omega$ is denoted by $\rH^s(\Omega)$ (resp. $\bH^s(\Omega)$). The corresponding norm and semi-norm of $\rH^s(\Omega)$ are denoted by $\|\bullet\|_{s,\Omega}$ and $|\bullet|_{s,\Omega}$, respectively. When $s=0$, we adopt the convention $\rH^0(\Omega):=\rL^2(\Omega)$ and denote by $(\bullet, \bullet)_\Omega$ the inner product in this space, and use the same notation for its vector-valued counterpart. The subscript is omitted whenever the domain is clear from the context.

On the other hand, The letter $C$ denotes a generic positive constant independent of the mesh size $h$ and physical constants (which might stand for different values at its different occurrences). Furthermore, given any positive expressions $a$ and $b$, the notation $a \,\lesssim\, b$  means that $a \leq C\, b$.

\paragraph{Structure.} This work is structured as follows: Section~\ref{section:model} reviews how a general poroelastic material is characterized in order to have an incompressible solid phase and show how to obtain the sought linearized system. In Section~\ref{section:numerical} we show how the problem can be discretized in a stable manner both in space and time. Here, we propose a Schur complement based preconditioner for the linearized model. In Section~\ref{section:tests} we validate our theoretical claims with convergence tests for the steady and time-dependent versions of the scheme, and also address a benchmark 3D problem. For this, we study separately the optimality and the scalability of the proposed overall solution strategy. We conclude with a brief discussion of the developments of the paper, in Section~\ref{section:discussion}.

\section{Model formulation}\label{section:model}
In this section we present the fully nonlinear model with solid incompressibility in pressure-mixed form. We then compute the linearized model and study the mathematical structure of its weak formulation. We further consider an additional section that shows how the parameters of this linearized model are related to a standard Biot model. This helps position this model within the landscape of other poroelastic models.

\subsection{Nonlinear model with solid incompressibility}\label{section:incompressible_formulation}
For a thorough explanation and derivation of the field equations, we redirect the reader to the excellent book by Coussy \cite{Coussy2004}. Given a domain $\Omega$ in $\mathbb R^{d}$ ($d\in\{2,3\}$) with Lipschitz boundary $\partial \Omega$, nonlinear poroelasticity consists in finding a displacement field $\vec d$ and a Lagrangian porosity $\varphi$ such that the conservation of linear momentum and mass hold, i.e.
\begin{subequations}
    \begin{align}\label{eq:momentum-strong}
        \rho_{s,0}\ddot{\vec d} - \vec \dive  \widetilde{\ten P}(\ten F, \varphi) &= \vec f , \\
        \label{eq:mass-strong}
        \dot \varphi - \dive \ten K \grad \widetilde{p}(\ten F, \varphi) &= g,
    \end{align}
\end{subequations}
where $\ten F = \ten I + \grad \vec d$ is the strain tensor, $\rho_{s,0}$ is the reference solid density, $\vec f$ is a given volume load, $g$ is a mass source or sink, and $\ten K$ is a permeability tensor assumed symmetric, bounded, and uniformly positive definite, i.e., there exist constants $0 < \ten K^{\mathrm{min}} \leq \ten \ten K^{\mathrm{max}} < \infty$ such that
$${\ten{K}^{\mathrm{min}} |\vec{\xi}|^2 \leq \vec \xi^{\tt t} \ten K \vec \xi \leq \ten K^{\mathrm{max}} |\vec \xi|^2 \quad \forall \vec \xi \in \mathbb{R}^d.}$$
The material characterization is given through a Helmholtz potential $\Psi$ such that 
$$ \ten P = \parder{\Psi}{\ten F}, \quad p = \parder{\Psi}{\varphi}. $$
Solid incompressibility states that the Lagrangian solid porosity, given by $\varphi_s = J - \varphi$ with $J=\det \ten F$, is constant in time. In particular, we have that 
\begin{equation}\label{eq:solid-incomp} 
    J - \varphi = J_0 - \varphi_0 := \varphi_{s,0},
\end{equation}
for a given porosity $\varphi_0$ and reference Jacobian $J_0$. Enforcing such a constraint is done through the Helmholtz potential, thus we consider a Langrange multiplier $\lambda$ and modify the Helmholtz potential as 
$$ \widetilde{\Psi} = \Psi + \lambda (J - \varphi  - \varphi_{s,0}). $$
This yields the modified Piola stress tensor
$$ \widetilde{\ten P} = \parder{\Psi}{\ten F} + \lambda \cof\ten F = \ten P + \lambda \cof\ten F, $$
where $\cof \ten F= J \ten F^{-T}$, and the modified pressure
$$ \tilde{p} = \parder{\Psi}{\varphi} - \lambda = p - \lambda. $$
We highlight that this problem, as observed in \cite{barnafi2024fully}, is not consistent due to the presence of third order terms. In addition, the multiplier $\lambda$ is an $\rL^2(\Omega)$ function, but requires the computation of its Laplacian in \eqref{eq:mass-strong}. To circumvent this, we use the mixed in pressure formulation proposed in \cite{barnafi2024fully}, which results in the following problem: find $(\vec d, \mu, \lambda, \varphi)$ such that
\begin{subequations}
    \label{eq:nonlinear-mixed-p}
    \begin{alignat}{2}
        \rho_0\ddot{\vec d} - \dive\left(\ten P(\ten F, \varphi) + \lambda J\ten F^{-T}\right) &= \vec f && \quad \text{in}\,\Omega, \\
        \dot \varphi - \dive \ten K \grad \mu &= g && \quad \text{in}\,\Omega, \\
        J - \varphi &= \varphi_{s,0} && \quad \text{in}\,\Omega, \\
        \mu - p(\ten F, \varphi) + \lambda &= 0 && \quad \text{in}\, \Omega,
    \end{alignat}
\end{subequations}
for some given initial data $\vec d(0)=\vec d^0$, $\dot{\vec d}(0)=\vec v^0$, $\varphi(0)=\varphi^0$ and full Dirichlet boundary conditions given by
$\vec d = \vec 0$, $\mu=0$ on $\partial\Omega$.

We firstly note that the nonlinear terms in \eqref{eq:nonlinear-mixed-p} are $\ten P(\ten F, \varphi)$, $\lambda J \ten F^{-T}$, $J$, and $p(\ten F, \varphi)$.  Next, we consider all variables to be perturbed with respect to an equilibrium point as $\zeta = \zeta_0 + \delta \zeta$ for $\zeta \in\{\vec d, \mu, \lambda, \varphi\}$, from which we will compute a model for the increments $\delta \zeta$.  We obtain the following relations by using a first order approximation of all nonlinear terms: 
\begin{align*}
    \ten P(\ten F, \varphi)&\approx \ten P(\ten F_0, \varphi_0) + \parder {\ten P}{\ten F}:\delta \ten F + \parder{\ten P}{\varphi}\delta \varphi, \\
    \lambda J\ten F^{-T} &\approx \lambda_0J_0 \ten F_0^{-T} + \delta \lambda J_0\ten F_0^{-T} + \lambda_0 \delta(J\ten F^{-T}), \\
    J &\approx J_0 + J_0\ten F_0^{-T} : \delta \ten F, \\
    p(\ten F, \varphi) &\approx p(\ten F_0, \varphi_0) + \parder{p}{\ten F} : \delta \ten F + \parder{p}{\varphi}\delta \varphi, 
\end{align*}
where all omitted arguments in the partial derivatives are the reference equilibrium points. We make the following hypotheses on the linearization terms: 
\begin{itemize}
    \item $\vec d_0 = \vec 0$, i.e., small displacements, and $\lambda_0 = 0$. Note that this yields $J_0 = 1$.
    \item $\parder{\ten P}{\ten F}:\delta \ten F$ yields $\ten \sigma(\grad \vec d) = \mathbb C_\mathrm{Hooke} : \ten \varepsilon(\vec d)$, where $\ten\varepsilon$ is the symmetric part of the gradient and $\mathbb C_\mathrm{Hooke}$ is the well-known linear fourth-order Hooke tensor, assumed symmetric, bounded, and uniformly positive definite. Thus, there exist constants $0 < \mathbb C^{\mathrm{min}} \leq \ten \mathbb C^{\mathrm{max}} < \infty$ such that
    $$\mathbb C^{\mathrm{min}} |\ten \tau|^2 \leq \ten \tau^{\tt t} \mathbb C \ten \tau \leq \mathbb C^{\mathrm{max}} |\ten \tau|^2 \quad \forall \ten \tau \in \mathbb{R}^{d\times d}.$$
    This is not strictly a hypothesis but more of an algebraic convenience because the Piola stress tensor is not symmetric, and deriving a linearization that yields the Hooke tensor requires reparameterizing the Piola in terms of the second Piola tensor as a function of the Cauchy strain $\ten C=\ten F^T \ten F$. We avoid these nuances and simply assume the linearized symmetric form for simplicity.
    \item $\parder{\ten P}{\varphi} = \parder{p}{\ten F} = \beta \ten I$ for some positive constant $\beta = \alpha M$, where $M$ is the Biot modulus and $\alpha$ is the Biot coefficient. 
    \item The pressure function is monotone, so $\parder{p}{\varphi} = M  \geq 0$, with $M$ the Biot modulus.
\end{itemize}
The parameter choices are motivated by the standard Biot model; details are given in Section~\ref{section:parameters}. Using these hypotheses, we obtain the linearized model that we propose,   written in terms of displacement $\vec d$, total pore pressure $\mu$, porosity $\varphi$, and incompressibility Lagrange multiplier $\lambda$:
    \begin{subequations}\label{eq:model-strong}
        \begin{alignat}{2}
            \rho_{s,0}\ddot{\vec d} - \dive\left(\ten \sigma(\grad \vec d) +\beta \varphi \ten I + \lambda \ten I\right) &= \vec f &&\quad \text{in}\,\Omega, \\
            \dot \varphi - \dive \ten K \grad \mu &= g &&\quad \text{in}\,\Omega, \label{eq:model-strong-mass}\\
            \mu - \beta \dive \vec d - M \varphi + \lambda &= p_0 &&\quad \text{in}\, \Omega, \label{eq:model-strong-lambda}\\
            \dive \vec d - \varphi&= -\varphi_{0}&&\quad \text{in}\,\Omega, \label{eq:model-strong-constraint}
        \end{alignat}
    \end{subequations}
    together with the boundary and initial conditions specified at the end of Section~\ref{section:incompressible_formulation}. In contrast to the standard Biot model, the proposed model yields both pressure and porosity as primary variables. In addition, solid incompressibility is considered by rigorously deriving it from the original nonlinear problem, and not through a limit process on a material specific constant. Naturally, both processes yield equivalent models, and we establish this in detail in Section~\ref{section:parameters} with respect to the well-known incompressible Biot.
\subsection{Equivalence with Biot's model}\label{section:parameters}
To obtain the parameters, we look at how they can be compared with a classic Biot system. In such models, the volume fraction $\varphi$ is given by the same linearized law we obtained, given by
    $$ \varphi = \alpha \dive \vec d + \frac 1 M p, $$
where $\alpha$ is the Biot coefficient and $M$ is the Biot modulus. If we consider that the function $\varphi \mapsto p(\ten F, \varphi)$ can be inverted for all $\ten F$, then we can linearize the porosity as 
    $$ \varphi = \varphi(\ten F, p) \approx \varphi_0 + \parder{\varphi}{\ten F} : \ten F + \parder{\varphi}{p} p. $$
Through the linearization, we can then consider $\left(\parder{p}{\varphi}\right)^{-1} = \parder{\varphi}{p} = \frac 1 M$ and $\parder{\varphi}{\ten F} = \alpha\ten I$, from which we can readily compute that 
    $$ \alpha\ten I = \parder{\varphi}{p}\parder{p}{\ten F} = \frac 1 M \parder{p}{\ten F}, $$
so that the linearization hypothesis yields $\parder{p}{\ten F} = \alpha M \ten I$.  To further contextualize the proposed model, we demonstrate its exact equivalence to a standard two-field Biot model in the fully incompressible limit. Starting from the steady state of the continuous formulation defined in \eqref{eq:model-strong}, we can eliminate the auxiliary variables $\lambda$ and $\varphi$ to recover a pure displacement-pressure system. From the solid incompressibility constraint \eqref{eq:model-strong-constraint}, we have:
    \begin{equation}\varphi = \dive \vec d + \varphi_0. \label{eq:phi_elim}\end{equation}
    Substituting \eqref{eq:phi_elim} into the total pressure definition \eqref{eq:model-strong-lambda}, we can express the Lagrange multiplier $\lambda$ exclusively in terms of the displacement $\vec d$ and the pore pressure $\mu$:
    \begin{equation}\lambda = p_0 - \mu + \beta \dive \vec d + M(\dive \vec d + \varphi_0) = p_0 - \mu + (\beta + M)\dive \vec d + M\varphi_0. \label{eq:lambda_elim}\end{equation}
Next, we define the total stress tensor of our formulation as $\ten \sigma_{\text{tot}} = \ten \sigma(\nabla d) + \beta\varphi I + \lambda I$. Inserting \eqref{eq:phi_elim} and \eqref{eq:lambda_elim} into this stress tensor yields:
    \begin{align}
        \ten \sigma_{\text{tot}} &= 2\mu_s \ten \varepsilon(\vec d) + \lambda_s (\dive \vec d) \ten I + \beta(\dive \vec d + \varphi_0) \ten I + \big( p_0 - \mu + (\beta + M)\dive \vec d + M\varphi_0 \big) \ten I \nonumber \\
                            &= 2\mu_s \ten \varepsilon(\vec d) + \big( \lambda_s + 2\beta + M \big) (\dive \vec d) \ten I - \mu \ten I + C_0 \ten I, \label{eq:stress_reduced}
    \end{align}
    where $C_0 = \beta\varphi_0 + p_0 + M\varphi_0$ is a constant initial stress state. Equation \eqref{eq:stress_reduced} is exactly the stress tensor of a standard Biot model with a fluid pressure $\alpha_{\text{Biot}}p = \mu$, with the Biot--Willis coefficient $\alpha_{\text{Biot}}$, and a modified volumetric Lamé parameter defined as
    \begin{equation}\tilde{\lambda} = \lambda_s + 2\beta + M. \label{eq:lame_tilde}\end{equation}
    Furthermore, substituting \eqref{eq:phi_elim} into the mass conservation equation \eqref{eq:model-strong-mass} yields:
    \begin{equation}\dive \dot{\vec d} - \dive(\ten K \nabla \mu) = g,\end{equation}
which matches the standard Biot mass equation in the limit where both the solid grains and the fluid are fully incompressible (i.e., $1/M_{\text{Biot}} = 0$). This reduction confirms that while the proposed four-field system introduces additional variables to rigorously impose solid incompressibility and handle nonlinear stress-free configurations, its linearized form governs the exact same physics as a standard incompressible Biot model subject to an augmented volumetric stiffness.

\subsection{Semi-discrete weak formulation}
We start by considering a time partition $0=t^0 < t^1 < ... < t^N = T$ for some final time $T$ with time step $\tau$ given by $t^{i+1} - t^i \eqqcolon \tau$ for all $i$. We denote the approximation $\zeta(t^n) \approx \zeta^n$ and look for the variables $\zeta^n$ at each instant. In addition, we define the abstract spaces $\bV^d:=\bH^1_0(\Omega)$, $\rV^\mu:=\rH^1_0(\Omega)$, $\rV^\varphi:=\rL^2(\Omega)$, and $\rV^\lambda:=\rL^2(\Omega)$; such that the solution space is given by $\bV^* \coloneqq \bV^d\times \rV^\mu\times \rV^\varphi \times \rV^\lambda$. 

Applying an implicit time discretization yields the following semi-discrete weak formulation: given $\vec f^n\in \bL^2(\Omega)$ and $g^n,p_0,\varphi_0\in \rL^2(\Omega)$; find $(\vec d^n, \mu^n, \varphi^n, \lambda^n)\in \bV^*$ at each time instant $n\in \{1, ..., N\}$, such that
\begin{subequations}\label{eq:weak-form}
    \begin{alignat}{2}
        \frac{\rho_0}{\tau^2}(\vec d^n, \vec d^*) + (\mathbb C_\mathrm{Hooke}:\ten\varepsilon(\vec d^n), \ten\varepsilon(\vec d^*)) + \beta(\varphi^n, \dive \vec d^*) + (\lambda^n, \dive \vec d^*) &= \langle \vec f^n, \vec d^*\rangle &&\quad \forall \vec d^* \in \bV^{\vec d}, \\
        (\varphi^n, \mu^*) + \tau(\ten K\grad \mu^n, \grad \mu^*) &= \tau\langle g^n, \mu^* \rangle &&\quad \forall \mu^* \in \rV^{\mu}, \\
        \beta (\dive \vec d^n, \varphi^*) - (\mu^n, \varphi^*) - (\lambda^n, \varphi^*) + M(\varphi^n, \varphi^*) &= -(p_0, \varphi^*)&&\quad \forall \varphi^* \in \rV^{\varphi}, \\
        (\dive \vec d^n, \lambda^*) - (\varphi^n, \lambda^*) &= -(\varphi_0, \lambda^* ) &&\quad \forall \lambda^* \in \rV^{\lambda}.
    \end{alignat}
\end{subequations}
Here we have used a slight abuse of notation as the right-hand side terms $\vec f^n$ and $g^n$ consider also the terms arising from the discretization of the time derivatives. Note that the equation for $\mu^*$ has been scaled in time.

We now define the linear operators $\cA_1:  \bV^{\vec d} \to (\bV^{\vec d})^\prime$, $\cA_2:  \rV^{\mu} \to (\rV^{\mu})^\prime$, $\cA_3:  \rV^{\varphi} \to (\rV^{\varphi})^\prime$ $\cB_1:\bV^{\vec d} \to (\rV^{\varphi})^\prime$, $\cB_2:\bV^{\vec d} \to (\rV^{\lambda})^\prime$, $\cC:  \rV^{\varphi} \to (\rV^{\mu})^\prime$, and $\cD:  \rV^{\varphi} \to (\rV^{\lambda})^\prime$, together with the linear forms $\cF:\bV^{\vec d} \to \mathbb R$, $\cG:\rV^{\mu} \to \mathbb R$, $\cH:\rV^{\varphi} \to \mathbb R$, and $\cI:\rV^{\lambda} \to \mathbb R$; given by
\begin{gather*}
    \langle \cA_1 \vec d^n, \vec d^* \rangle :=  \frac{\rho_0}{\tau^2}(\vec d^n, \vec d^*) + (\mathbb C_\mathrm{Hooke}:\ten\varepsilon(\vec d^n), \ten\varepsilon(\vec d^*)) , \quad 
    \langle \cA_2 \mu^n, \mu^* \rangle := \tau(\ten K\grad \mu^n, \grad \mu^*),\\ 
    \langle \cA_3 \varphi^n, \varphi^* \rangle := M(\varphi^n, \varphi^*),\quad
    \langle \cB_1 \vec d^n, \varphi^* \rangle := \beta(\varphi^*, \dive \vec d^n),\\
    \langle \cB_2 \vec d^n, \lambda^* \rangle := (\lambda^*, \dive \vec d^n),\quad
    \langle \cC \mu^n, \varphi^* \rangle := (\mu^n, \varphi^*),\quad
    \langle \cD \varphi^n, \lambda^* \rangle := - (\varphi^n, \lambda^*),\\
    \cF\vec d^* := \langle \vec f^n, \vec d^*\rangle,\quad
    \cG\mu^* := \tau\langle g^n, \mu^* \rangle,\quad
    \cH\varphi^* := -(p_0, \varphi^*),\quad
    \cI\lambda^* := -(\varphi_0, \lambda^* ).
\end{gather*}
Thus, the operator form of \eqref{eq:weak-form} reads: given $\vec f^n\in \bL^2(\Omega)$ and $g^n,p_0,\varphi_0\in \rL^2(\Omega)$; find $(\vec d^n, \mu^n, \varphi^n, \lambda^n)\in \bV^*$ at each time instant $n\in \{1, ..., N\}$, such that 
\begin{equation}\label{eq:operator_form}
    \begin{bmatrix}
    \cA_1 & 0 & \cB_1^* & \cB_2^* \\
    0 & \cA_2 & \cC^* & 0 \\
    \cB_1 & -\cC & \cA_3 & \cD \\
    \cB_2 & 0 & \cD & 0 
    \end{bmatrix}
    \begin{bmatrix}
        \vec d^n \\
        \mu^n \\
        \varphi^n \\
        \lambda^n
    \end{bmatrix}
    =
    \begin{bmatrix}
        \cF \\
        \cG \\
        \cH \\
        \cI
    \end{bmatrix} 
    \quad \text{in}\, (\bV^*)^\prime.
\end{equation}

\subsection{Semi-discrete well-posedness analysis}\label{section:semi-discrete-wp}
In this section, we establish the well-posedness of the semi-discrete scheme in operator form \eqref{eq:operator_form}, which is equivalent to the weak formulation \eqref{eq:weak-form}, which we observe to have a double saddle-point structure. We illustrate this observation by defining the linear operators $\cA: \bV \to \bV^\prime$ and $\cB: \bV \to \rQ^\prime$; and the linear form $\widetilde{F}:\bV \to \mathbb R$, as
\begin{align*}
    \langle \cA(\vec d^n,\mu^n,\varphi^n), (\vec d^*,\mu^*,\varphi^*) \rangle &:= (\cA_1 \vec d^n + \cB_1^* \varphi^n,\cA_2 \mu^n + \cC\varphi^n, \cB_1\vec d^n - \cC\varphi^n + \cA_3\varphi^n), \\
    \langle \cB(\vec d^n,\mu^n,\varphi^n), \lambda^*\rangle &:= \cB_2 \vec d^n + \cD\varphi^n,\\
    \widetilde{\cF}(\vec d^*,\mu^*,\varphi^*) &:= (\cF \vec d^*, \cG\mu^*, \cH\varphi^*),
\end{align*}
where $\bV:= \bV^{\vec d}\times \rV^\mu \times \rV^\varphi$ and $\rQ := \rV^\lambda$ equipped with the natural norms 
$$\|(\vec d^*,\mu^*,\varphi^*)\|_{\bV}^2 := \|\vec d^*\|_{1,\Omega}^2 + \|\mu^*\|_{1,\Omega}^2 + \|\varphi^*\|_{0,\Omega}^2 \qquad \text{and} \qquad \|\lambda^*\|_{\rQ}^2 := \|\lambda^*\|_{0,\Omega}^2,$$ respectively. These definitions lead to the following saddle-point formulation: given $\vec f^n\in \bL^2(\Omega)$ and $g^n,p_0,\varphi_0\in \rL^2(\Omega)$; find $((\vec d^n, \mu^n, \varphi^n), \lambda^n)\in \bV\times \rQ$ at each time instant $n\in \{1, ..., N\}$, such that 
\begin{equation}\label{eq:operator_form_simplified}
    \begin{bmatrix}
    \cA & \cB^*\\
    \cB & 0 
    \end{bmatrix}
    \begin{bmatrix}
        (\vec d^n, \mu^n, \varphi^n) \\
        \lambda^n
    \end{bmatrix}
    =
    \begin{bmatrix}
        \widetilde{\cF} \\
        \cI
    \end{bmatrix} 
    \quad \text{in}\, (\bV\times \rQ)^\prime.
\end{equation}

We now focus on checking the Ladyzhenskaya–Babu\v ska–Brezzi conditions \cite{BoffiBrezziFortin2013,gatica2014simple} in order to establish the well-posedness of the saddle-point problem in \eqref{eq:operator_form_simplified}. The following result shows the boundedness of the associated operators.
\begin{lemma}[Boundedness]\label{lem:boundedness}
    The following bounds hold
    \begin{align*}
        \|\cA(\vec d,\mu,\varphi)\|_{\bV^\prime}&\lesssim \max\left\{\frac{\rho_0}{\tau^2},\mathbb C_{\mathrm{Hooke}}^{\mathrm{max}},2\beta,\tau\ten K^{\mathrm{max}},M\right\}\|(\vec d,\mu,\varphi)\|_{\bV},\\
        \|\cB(\vec d,\mu,\varphi)\|_{\rQ^\prime} &\lesssim \|(\vec d,\mu,\varphi)\|_{\bV},\\  \|\cF\|_{\bV^\prime}&\lesssim \|\vec f^n\|_{0,\Omega} + \tau\|g^n\|_{0,\Omega} + \|p_0\|_{0,\Omega},\\
        \|\cG\|_{\rQ^\prime} &\lesssim \|\varphi_0\|_{0,\Omega}.
    \end{align*}
\end{lemma}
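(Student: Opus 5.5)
The plan is to bound each operator and form by testing against arbitrary elements and applying the Cauchy--Schwarz inequality term by term. For $\cA$, fix $(\vec d,\mu,\varphi)$ and $(\vec d^*,\mu^*,\varphi^*)$ in $\bV$ and expand the duality pairing into its seven bilinear pieces.

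These pieces are $\frac{\rho_0}{\tau^2}(\vec d,\vec d^*)$, $(\mathbb C_\mathrm{Hooke}:\ten\varepsilon(\vec d),\ten\varepsilon(\vec d^*))$ and $\beta(\varphi,\dive\vec d^*)$. They further include $\tau(\ten K\grad\mu,\grad\mu^*)$, $(\varphi,\mu^*)$, $\beta(\dive\vec d,\varphi^*)$, $-(\mu,\varphi^*)$ and $M(\varphi,\varphi^*)$.

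Each piece is bounded as follows:
\begin{itemize}
\item the mass term by $\frac{\rho_0}{\tau^2}\|\vec d\|_{0,\Omega}\|\vec d^*\|_{0,\Omega}$;
\item the elastic term by $\mathbb C^{\mathrm{max}}\|\ten\varepsilon(\vec d)\|_{0,\Omega}\|\ten\varepsilon(\vec d^*)\|_{0,\Omega}$, using the upper bound on the Hooke tensor;
\item the two $\beta$-coupling terms by $\beta\|\varphi\|_{0,\Omega}\|\dive\vec d^*\|_{0,\Omega}$ and its transpose, using $\|\dive\vec v\|_{0,\Omega}\le\sqrt d\,|\vec v|_{1,\Omega}$, which together produce the $2\beta$;
\item the diffusion term by $\tau\ten K^{\mathrm{max}}|\mu|_{1,\Omega}|\mu^*|_{1,\Omega}$;
\item the storage term by $M\|\varphi\|_{0,\Omega}\|\varphi^*\|_{0,\Omega}$.
\end{itemize}

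The two $\cC$-coupling terms $(\varphi,\mu^*)$ and $(\mu,\varphi^*)$ carry no physical parameter. They are bounded by $\|\varphi\|_{0,\Omega}\|\mu^*\|_{1,\Omega}$ and its symmetric counterpart, and the resulting constant $1$ is absorbed into the hidden constant of $\lesssim$.

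Summing these bounds and using the discrete Cauchy--Schwarz inequality $\sum_i a_ib_i\le(\sum a_i^2)^{1/2}(\sum b_i^2)^{1/2}$ then gives a bound of the form $\max\{\cdots\}\,\|(\vec d,\mu,\varphi)\|_{\bV}\|(\vec d^*,\mu^*,\varphi^*)\|_{\bV}$, with the $\max$ over the parameters listed in the statement. Taking the supremum over unit test functions yields the first estimate.

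For $\cB$, the pairing is $(\lambda^*,\dive\vec d)-(\varphi,\lambda^*)$. It is bounded by $(\sqrt d\,|\vec d|_{1,\Omega}+\|\varphi\|_{0,\Omega})\|\lambda^*\|_{0,\Omega}$, which is at most $\|(\vec d,\mu,\varphi)\|_{\bV}\|\lambda^*\|_{\rQ}$ up to a constant.

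For the right-hand sides, the statement's $\cF$ should be read as the aggregated form $\widetilde\cF$ on $\bV$, and $\cG$ as the form $\cI$ on $\rQ$. With that reading:
\begin{itemize}
\item $\widetilde\cF$ is bounded by $\|\vec f^n\|_{0,\Omega}\|\vec d^*\|_{0,\Omega}+\tau\|g^n\|_{0,\Omega}\|\mu^*\|_{0,\Omega}+\|p_0\|_{0,\Omega}\|\varphi^*\|_{0,\Omega}$, and each test norm is controlled by $\|(\vec d^*,\mu^*,\varphi^*)\|_{\bV}$;
\item $\cI$ is bounded by $|(\varphi_0,\lambda^*)|\le\|\varphi_0\|_{0,\Omega}\|\lambda^*\|_{0,\Omega}$.
\end{itemize}

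There is no real obstacle here. The only points needing care are two pieces of bookkeeping: matching the statement's labels $\cF,\cG$ to $\widetilde\cF,\cI$, and tracking the parameter-free $\cC$ couplings so that they are absorbed into the generic constant.
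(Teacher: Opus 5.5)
Your argument is the paper's argument: the paper just cites dual norms, the Cauchy--Schwarz inequality, and the bounds on $\mathbb C_\mathrm{Hooke}$ and $\ten K$, and your term-by-term expansion spells that out. Your reading of the right-hand-side labels is also the correct one: $\cF$ is the aggregated form $\widetilde{\cF}$ on $\bV$, and $\cG$ is $\cI$ on $\rQ$.

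One step does not go through as written: absorbing the coefficient $1$ of the couplings $(\varphi,\mu^*)$ and $-(\mu,\varphi^*)$ into the hidden constant. The paper stipulates that the constant in $\lesssim$ is independent of the physical parameters, and these parameter-free terms keep $\cA$ bounded below. Testing with $(\vec 0,0,-\mu)$ gives $\|\cA(\vec 0,\mu,0)\|_{\bV^\prime}\ge\|\mu\|_{0,\Omega}$ for every $\mu$. For fixed $\mu$, however, $C\max\{\rho_0/\tau^2,\mathbb C_{\mathrm{Hooke}}^{\mathrm{max}},2\beta,\tau\ten K^{\mathrm{max}},M\}\,\|\mu\|_{1,\Omega}$ tends to zero as the parameters shrink, for any fixed $C$. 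Your computation therefore proves one of two things:
\begin{itemize}
\item the bound with $\max\{1,\rho_0/\tau^2,\mathbb C_{\mathrm{Hooke}}^{\mathrm{max}},2\beta,\tau\ten K^{\mathrm{max}},M\}$ in place of the stated maximum; or
\item the stated bound under an extra assumption such as $\tau^2\le\rho_0$, which makes $\rho_0/\tau^2\ge 1$.
\end{itemize}
The paper's one-line proof passes over this as well, so it is a defect of the statement rather than of your method. You should still state the correction explicitly rather than treat it as bookkeeping.
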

\begin{proof}
The bounds readily follow  from the definition of dual norms, together with the Cauchy--Schwarz inequality and the assumptions over $\mathbb C_\mathrm{Hooke}$ and $\ten K$ (cf. Section~\ref{section:incompressible_formulation}). 
\end{proof}
The result below establishes the inf-sup condition, and thus the surjectivity of $\cB$. 
\begin{lemma}[Inf-sup condition]\label{lem:inf-sup}
The following bound holds
    \begin{align*}
        \sup_{(\vec d,\mu,\varphi)\in \bV\setminus\{\vec 0, 0, 0\}}\frac{|\langle\cB(\vec d,\mu,\varphi),\lambda^*\rangle|}{\|(\vec d,\mu,\varphi)\|_{\bV}}   \gtrsim \|\lambda^*\|_{\rQ} \quad \forall \lambda^*\in\rQ.
    \end{align*}    
\end{lemma}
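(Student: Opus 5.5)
The key observation is that $\cB(\vec d,\mu,\varphi)$ acts on $\lambda^*$ as $(\dive\vec d,\lambda^*) - (\varphi,\lambda^*)$, and the $\varphi$-component lives in $\rL^2(\Omega)$, exactly the space $\rQ$. So no continuous inf-sup for the divergence (and no zero-mean constraint on $\lambda^*$) is needed. Given $\lambda^*\in\rQ$, I will test with the element
\[
(\vec d,\mu,\varphi) := (\vec 0, 0, -\lambda^*) \in \bV,
\]
which is admissible since $\rV^\varphi = \rL^2(\Omega)$ and $\vec 0\in\bH^1_0(\Omega)$, $0\in\rH^1_0(\Omega)$.

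With this choice, the definition of $\cB$ through $\cB_2$ and $\cD$ gives
\[
\langle \cB(\vec 0,0,-\lambda^*),\lambda^*\rangle = (\dive \vec 0,\lambda^*) - (-\lambda^*,\lambda^*) = \|\lambda^*\|_{0,\Omega}^2 .
\]
The norm of the test element is $\|(\vec 0,0,-\lambda^*)\|_{\bV} = \|\lambda^*\|_{0,\Omega}$. For $\lambda^*\neq 0$ this element is nonzero, so the supremum is bounded below by the quotient $\|\lambda^*\|_{0,\Omega}^2/\|\lambda^*\|_{0,\Omega} = \|\lambda^*\|_{\rQ}$. The case $\lambda^*=0$ is trivial. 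The resulting inf-sup constant is $1$, independent of all physical parameters and of $\tau$.

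There is essentially no obstacle. The only point requiring care is the sign convention of $\cD$, where $\langle\cD\varphi,\lambda^*\rangle = -(\varphi,\lambda^*)$; this is why the test function is $\varphi=-\lambda^*$. A remark worth adding is that the same argument carries over verbatim to any discretization in which the discrete porosity and multiplier spaces coincide (or the multiplier space is contained in the porosity space). This is what later permits a lowest-order inf-sup stable pairing without a Stokes-type compatibility condition between the displacement and multiplier spaces.
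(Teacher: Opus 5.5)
Your proof is correct and follows the paper's argument: the paper also tests with $\vec d=\vec 0$ and $\varphi=-\lambda^*$ ($\mu$ plays no role), obtaining the inf-sup constant $1$. The paper likewise remarks that the discrete version requires only $\rV^\lambda_h\subseteq\rV^\varphi_h$.
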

\begin{proof}
From the definition of the operator $\cB$, we readily see that
$$\cB(\vec d,\mu,\varphi) = (\lambda^*, \dive \vec d) - (\varphi, \lambda^*).$$
Since it does not depend on $\mu$, we choose $\vec d = \vec 0$ and $\varphi = -\lambda^*$ to obtain 
\begin{align*}
    \sup_{(\vec d,\mu,\varphi)\in \bV\setminus\{\vec 0, 0, 0\}}\frac{|\langle\cB(\vec d,\mu,\varphi),\lambda^*\rangle|}{\|(\vec d,\mu,\varphi)\|_{\bV}} \geq \frac{|(\lambda^*, \dive \vec 0) - (-\lambda^*, \lambda^*)|}{\|(\vec 0,-\lambda^*)\|_{\bV^{\vec d}\times \rV^{\varphi}}} = \|\lambda^*\|_{\rQ}.
\end{align*}
Thus, the result holds with the hidden constant equal to 1.
\end{proof}
We highlight that the proof hinges on the fact that $V^\lambda$ is contained in $V^\varphi$. This will be the only assumption required for the discrete inf-sup.  Now, we characterize the kernel of the operator $\cB$, denoted as $\ker(\cB)$.
\begin{lemma}[Kernel characterization]\label{lem:ker}
Let $\cB$ defined as in Section~\ref{section:semi-discrete-wp}. Then,
    $$\ker(\cB) = \{(\vec d,\mu,\varphi)\in \bV: \dive \vec d = \varphi \}.$$
\end{lemma}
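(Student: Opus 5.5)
The plan is to prove the set equality by double inclusion, working directly from the definition of $\cB$ in Section~\ref{section:semi-discrete-wp}. For $(\vec d,\mu,\varphi)\in\bV$ and $\lambda^*\in\rQ$ we have
\begin{equation*}
\langle \cB(\vec d,\mu,\varphi),\lambda^*\rangle = (\dive \vec d,\lambda^*) - (\varphi,\lambda^*) = (\dive\vec d - \varphi,\lambda^*),
\end{equation*}
by the definitions of $\cB_2$ and $\cD$. The first step is to record that this pairing is well defined and involves only $L^2$ functions: $\vec d\in\bH^1_0(\Omega)$ gives $\dive\vec d\in\rL^2(\Omega)$, and $\varphi\in\rV^\varphi=\rL^2(\Omega)$. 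Hence $w:=\dive\vec d-\varphi\in\rL^2(\Omega)$.

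For the inclusion ``$\supseteq$'', take $(\vec d,\mu,\varphi)$ with $\dive\vec d=\varphi$ almost everywhere. Then $w=0$, so the pairing vanishes for every $\lambda^*$, and therefore $\cB(\vec d,\mu,\varphi)=0$ in $\rQ'$.

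For the inclusion ``$\subseteq$'', assume $(w,\lambda^*)=0$ for all $\lambda^*\in\rQ$. The key observation is the same one used in the proof of Lemma~\ref{lem:inf-sup}: the test space $\rQ=\rV^\lambda=\rL^2(\Omega)$ contains $w$. We may therefore take $\lambda^*=w$, which gives $\|w\|_{0,\Omega}^2=0$. It follows that $\dive\vec d=\varphi$ almost everywhere. The component $\mu$ is unconstrained, since $\cB$ does not act on it, and this matches the description of the set in the statement.

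No step is a real obstacle. The only point needing care is the choice $\lambda^*=w$, which is admissible precisely because $\dive\bV^{\vec d}+\rV^\varphi\subseteq\rV^\lambda$. In the discrete setting this inclusion may fail: one would then only obtain $\Pi_{\rQ_h}(\dive\vec d_h-\varphi_h)=0$, where $\Pi_{\rQ_h}$ is the $L^2$ projection onto the discrete multiplier space. That issue is beyond the present lemma.
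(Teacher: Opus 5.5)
Your proof is correct and follows the paper's argument: write the pairing as $(\dive\vec d-\varphi,\lambda^*)$ and use that all three functions lie in $\rL^2(\Omega)$. Your explicit test choice $\lambda^*=\dive\vec d-\varphi$ spells out what the paper leaves implicit. One small quibble: the proof of Lemma~\ref{lem:inf-sup} relies on $\rV^\lambda\subseteq\rV^\varphi$, whereas your step uses the reverse inclusion $\dive\bV^{\vec d}+\rV^\varphi\subseteq\rV^\lambda$ (closer in spirit to the $\rV^\varphi_h\subseteq\rV^\lambda_h$ assumption of Lemma~\ref{lem:discrete_coercivity}), so it is not quite ``the same observation,'' although both inclusions hold trivially in the continuous setting.
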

\begin{proof}
    From the definition of $\ker(\cB)$, we readily see that
    \begin{align*}
        \ker(\cB) &= \{(\vec d,\mu,\varphi)\in \bV: (\lambda^*, \dive \vec d) - (\varphi, \lambda^*)=0\quad \forall \lambda^*\in\rQ \}\\
        &=  \{(\vec d,\mu,\varphi)\in \bV:   (\dive \vec d-\varphi, \lambda^*)=0\quad \forall \lambda^*\in\rQ \}.
    \end{align*}
    Since $\dive \vec d$, $\varphi$, and $\lambda^*$ are functions in  $\rL^2(\Omega)$, the result holds.
\end{proof}
The coercivity of the operator $\cA$ in $\ker(\cB)$ is provided next.
\begin{lemma}[Coercivity of $\cA$]\label{lem:coercivity}
The following bound holds
\begin{align*}
    \langle \cA(\vec d,\mu,\varphi), (\vec d,\mu,\varphi) \rangle \gtrsim \min \left\{\frac{\rho_0}{\tau^2},\mathbb C_\mathrm{Hooke}^{\min},2\beta,\tau \ten K^{\mathrm{min}},M\right\} \|(\vec d,\mu,\varphi)\|_\bV^2
\end{align*}
for all $(\vec d,\mu,\varphi)$ in $\ker(\cB)$.
\end{lemma}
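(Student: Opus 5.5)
The plan is to expand the quadratic form $\langle \cA(\vec d,\mu,\varphi),(\vec d,\mu,\varphi)\rangle$ directly from the definition of $\cA$. Then we use the kernel characterization of Lemma~\ref{lem:ker} to turn the indefinite coupling terms into a positive contribution. Finally, Korn's and Poincar\'e's inequalities recover the full $\bV$-norm.

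\textbf{Step 1: expand the form and cancel the $\mu$--$\varphi$ coupling.} Testing the three components of $\cA(\vec d,\mu,\varphi)$ against $(\vec d,\mu,\varphi)$ gives
\begin{align*}
\langle \cA(\vec d,\mu,\varphi),(\vec d,\mu,\varphi)\rangle
&= \frac{\rho_0}{\tau^2}\|\vec d\|_{0,\Omega}^2 + (\mathbb C_\mathrm{Hooke}:\ten\varepsilon(\vec d),\ten\varepsilon(\vec d)) + \beta(\varphi,\dive\vec d) + \tau(\ten K\grad\mu,\grad\mu)\\
&\quad + (\varphi,\mu) + \beta(\dive\vec d,\varphi) - (\mu,\varphi) + M\|\varphi\|_{0,\Omega}^2 .
\end{align*}
The terms $(\varphi,\mu)$ and $-(\mu,\varphi)$ cancel exactly. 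This is the point of the sign convention $-\cC$ in the third row of \eqref{eq:operator_form}, which makes the $\mu$--$\varphi$ coupling skew-symmetric.

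\textbf{Step 2: use the kernel.} The remaining coupling is $2\beta(\varphi,\dive\vec d)$, which is indefinite in general. On $\ker(\cB)$ we have $\dive\vec d=\varphi$ by Lemma~\ref{lem:ker}, so it equals $2\beta\|\varphi\|_{0,\Omega}^2\ge 0$. Together with the bounds on $\mathbb C_\mathrm{Hooke}$ and $\ten K$ from Section~\ref{section:incompressible_formulation}, this yields
$$
\langle \cA(\vec d,\mu,\varphi),(\vec d,\mu,\varphi)\rangle \ge \frac{\rho_0}{\tau^2}\|\vec d\|_{0,\Omega}^2 + \mathbb C_\mathrm{Hooke}^{\min}\|\ten\varepsilon(\vec d)\|_{0,\Omega}^2 + \tau\ten K^{\min}|\mu|_{1,\Omega}^2 + (2\beta+M)\|\varphi\|_{0,\Omega}^2 .
$$

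\textbf{Step 3: recover the full norm.} Since $\vec d\in\bH^1_0(\Omega)$, Korn's inequality gives $\|\ten\varepsilon(\vec d)\|_{0,\Omega}\gtrsim\|\vec d\|_{1,\Omega}$. Since $\mu\in\rH^1_0(\Omega)$, Poincar\'e's inequality gives $|\mu|_{1,\Omega}\gtrsim\|\mu\|_{1,\Omega}$. Bounding each coefficient from below by the minimum, and using $2\beta+M\ge 2\beta$ (as $M\ge 0$), gives the claim with the constant $\min\{\rho_0/\tau^2,\mathbb C_\mathrm{Hooke}^{\min},2\beta,\tau\ten K^{\min},M\}$ up to the Korn and Poincar\'e constants. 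The mass term $\rho_0/\tau^2$ only helps and could in fact be dropped.

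The only real obstacle is Step 2. Off the kernel, the $\beta$-coupling cannot be controlled without a smallness condition on $\beta$, so the proof must apply the kernel characterization before estimating. Once that substitution is made, everything else is routine.
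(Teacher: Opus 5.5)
Your proof is correct and follows the same route as the paper. You expand the quadratic form, cancel the $\mu$--$\varphi$ coupling, and use $\dive \vec d = \varphi$ on $\ker(\cB)$ to turn $2\beta(\varphi,\dive\vec d)$ into $2\beta\|\varphi\|_{0,\Omega}^2$. You are also slightly more careful than the paper: it attributes the $\cC$/$\cC^*$ cancellation to the kernel, whereas it really comes from the skew-symmetric sign convention, as you say. It also leaves implicit the Korn and Poincar\'e inequalities needed to pass from $\|\ten\varepsilon(\vec d)\|_{0,\Omega}$ and $|\mu|_{1,\Omega}$ to the full $\bV$-norm.
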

\begin{proof}
    From the definitions of the operator $\cA$ and $\ker(\cB)$, we obtain
    \begin{align*}
        \langle \cA(\vec d,\mu,\varphi), (\vec d,\mu,\varphi) \rangle &= \frac{\rho_0}{\tau^2}(\vec d, \vec d) + (\mathbb C_\mathrm{Hooke}:\ten\varepsilon(\vec d), \ten\varepsilon(\vec d)) + 2\beta(\varphi, \varphi) \\
        &\quad + \tau(\ten K\grad \mu, \grad \mu) + M(\varphi, \varphi).
    \end{align*}
Note that the off-diagonal terms given by $\cC$ and $\cC^*$ canceled out because of the structure of $\ker(\cB)$. The result follows by using the definition of the norm $\|\bullet\|_{\bV}$ in the equation above.
\end{proof}
We finalise this section by establishing the well-posedness of the semi-discrete formulation as a direct consequence of Lemmas~\ref{lem:boundedness}-\ref{lem:coercivity} (see also \cite[Theorem 4.2.3]{BoffiBrezziFortin2013}).

\begin{theorem}\label{th:wp}
    Given $\vec f^n\in \bL^2(\Omega)$ and $g^n,p_0,\varphi_0\in \rL^2(\Omega)$, the saddle-point formulation \eqref{eq:operator_form_simplified} admits a unique solution $((\vec d^n, \mu^n, \varphi^n), \lambda^n)\in \bV\times \rQ$ at each time instant $n\in \{1, ..., N\}$. Moreover, the continuous dependence on data holds, i.e.,
    \ifnum\siam=1
    \begin{multline*}\|((\vec d^n, \mu^n, \varphi^n), \lambda^n)\|_{\bV\times \rQ} \\ \lesssim \max\left\{ \frac{1}{\alpha_0}, \frac{2\sqrt{\alpha_1}}{\sqrt{\alpha_0}},\alpha_1 \right\} \left( \|\vec f^n\|_{0,\Omega} + \tau\|g^n\|_{0,\Omega} + \|p_0\|_{0,\Omega} + \|\varphi_0\|_{0,\Omega}\right),\end{multline*}
\else
    $$ \|((\vec d^n, \mu^n, \varphi^n), \lambda^n)\|_{\bV\times \rQ} \lesssim \max\left\{ \frac{1}{\alpha_0}, \frac{2\sqrt{\alpha_1}}{\sqrt{\alpha_0}},\alpha_1 \right\} \left( \|\vec f^n\|_{0,\Omega} + \tau\|g^n\|_{0,\Omega} + \|p_0\|_{0,\Omega} + \|\varphi_0\|_{0,\Omega}\right),$$
\fi
    with $\alpha_0 = \min \left\{\frac{\rho_0}{\tau^2},\mathbb C_\mathrm{Hooke}^{\min},2\beta,\tau \ten K^{\mathrm{min}},M\right\}$ and $ \alpha_1 = \max\left\{\frac{\rho_0}{\tau^2},\mathbb C_{\mathrm{Hooke}}^{\mathrm{max}},2\beta,\tau\ten K^{\mathrm{max}},M\right\}.$
\end{theorem}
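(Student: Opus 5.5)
The plan is to apply the abstract Babu\v{s}ka--Brezzi theory for saddle-point problems (cf. \cite[Theorem 4.2.3]{BoffiBrezziFortin2013} or \cite{gatica2014simple}) directly to \eqref{eq:operator_form_simplified}. That theory needs three ingredients, which we take from the preceding lemmas.

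First, $\cA$ and $\cB$ must be bounded. Lemma~\ref{lem:boundedness} gives $\|\cA\|\lesssim\alpha_1$ and $\|\cB\|\lesssim 1$. It also bounds the right-hand sides: $\widetilde{\cF}$ is controlled by $\|\vec f^n\|_{0,\Omega}+\tau\|g^n\|_{0,\Omega}+\|p_0\|_{0,\Omega}$, and $\cI$ by $\|\varphi_0\|_{0,\Omega}$ (the lemma's $\cF,\cG$ should be read as $\widetilde{\cF},\cI$).

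Second, $\cB$ must satisfy an inf-sup condition. Lemma~\ref{lem:inf-sup} gives this with constant $\gamma=1$, so $\cB$ is surjective onto $\rQ'$.

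Third, $\cA$ must be coercive on $\ker(\cB)$. Lemma~\ref{lem:ker} identifies the kernel as $\{\dive\vec d=\varphi\}$, a closed subspace. On it, Lemma~\ref{lem:coercivity} gives ellipticity with constant $\alpha_0$. Since $\cA$ is not symmetric (the $\cC,\cC^*$ blocks enter with opposite signs), I will rely on the version of the abstract theorem that only requires coercivity on the kernel. That version makes no symmetry assumption; coercivity already yields the inf-sup condition on the kernel via Lax--Milgram, which is all that is needed.

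With these in hand, the theorem gives existence and uniqueness. It also gives the standard stability estimates, which I will write out explicitly so the constants are visible. Fix $\lambda^n$ through a lifting: by inf-sup, there is $\vec u_0\in\ker(\cB)^\perp$ with $\cB\vec u_0=\cI$ and $\|\vec u_0\|_{\bV}\le\|\cI\|_{\rQ'}/\gamma$. The remainder $\vec u-\vec u_0\in\ker(\cB)$ is then found by Lax--Milgram, with bound $\frac{1}{\alpha_0}(\|\widetilde{\cF}\|+\alpha_1\|\vec u_0\|)$. The multiplier is recovered from $\cB^*\lambda^n=\widetilde{\cF}-\cA\vec u$, again using inf-sup, which gives $\|\lambda^n\|_{\rQ}\le\frac{1}{\gamma}(\|\widetilde{\cF}\|+\alpha_1\|\vec u\|)$. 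Collecting terms produces constants built from $1/\alpha_0$, $\alpha_1/\alpha_0$ and $\alpha_1^2/\alpha_0$.

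The main obstacle is bookkeeping rather than analysis: matching these constants to the specific form $\max\{1/\alpha_0,\,2\sqrt{\alpha_1}/\sqrt{\alpha_0},\,\alpha_1\}$ stated in the theorem. I would first try the sharper estimates in \cite{gatica2014simple} (Theorem 2.3 there), which express the stability constant through $\alpha_0$, $\|\cA\|$ and $\gamma=1$. If the exact expression still does not come out, I would absorb the discrepancy into the hidden constant of $\lesssim$, since all relevant quantities are explicit. Finally, summing the data bounds from Lemma~\ref{lem:boundedness} gives the stated right-hand side.
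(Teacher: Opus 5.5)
Your existence and uniqueness argument is the paper's own: the paper just invokes Lemmas~\ref{lem:boundedness}--\ref{lem:coercivity} together with \cite[Theorem 4.2.3]{BoffiBrezziFortin2013}. Your remark that $\cA$ is nonsymmetric, so that the kernel-coercive version of the abstract theorem is the right one, is also correct.

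The gap is in the continuous-dependence estimate. With $\gamma=1$, your lifting, Lax--Milgram and recovery steps give
\[
\|(\vec d^n,\mu^n,\varphi^n)\|_{\bV}\le \tfrac{1}{\alpha_0}\|\widetilde{\cF}\|+\bigl(1+\tfrac{\alpha_1}{\alpha_0}\bigr)\|\cI\|,\qquad \|\lambda^n\|_{\rQ}\le \bigl(1+\tfrac{\alpha_1}{\alpha_0}\bigr)\|\widetilde{\cF}\|+\alpha_1\bigl(1+\tfrac{\alpha_1}{\alpha_0}\bigr)\|\cI\|.
\]
The excess over $\max\{1/\alpha_0,2\sqrt{\alpha_1}/\sqrt{\alpha_0},\alpha_1\}$ cannot be absorbed into $\lesssim$. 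The paper's hidden constants are independent of the physical parameters, but the ratio between your constants and the stated one is unbounded in them. For example, for large Biot modulus $M$ with everything else fixed, $\alpha_1\sim M$ while $\alpha_0$ stays fixed. Your $\lambda$-constant then grows like $M^2$, whereas the stated one grows like $M$. As $\tau\to0$ the mismatch is $\tau^{-5}$ versus $\tau^{-2}$. Switching to the generic estimates of \cite{gatica2014simple} does not help, since they carry the same constants.

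What is missing is the square-root refinement. With $\gamma=1$ and $\|\cA\|\lesssim\alpha_1$, the stated constant is exactly the set $\frac{1}{\alpha_0}$, $\frac{2\|\cA\|^{1/2}}{\alpha_0^{1/2}\gamma}$, $\frac{\|\cA\|}{\gamma^2}$ of \cite[Theorem 4.2.3]{BoffiBrezziFortin2013}. However, that refinement covers the symmetric positive semidefinite case: it bounds the cross term between the lifting and the kernel component by Cauchy--Schwarz in the $a$-seminorm. Here $\cA$ is not symmetric, as you noticed, and in general not semidefinite off $\ker(\cB)$.

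A self-contained route uses the structure directly. Eliminate $\varphi^n=\dive\vec d^n+\varphi_0$ and $\lambda^n=(\beta+M)\dive\vec d^n-\mu^n+M\varphi_0+p_0$, then test the reduced $(\vec d^n,\mu^n)$ equations with $(\vec d^n,\mu^n)$. The $\vec d$--$\mu$ coupling is skew and cancels, leaving
\[
\langle\cA_1\vec d^n,\vec d^n\rangle+(2\beta+M)\|\dive\vec d^n\|_{0,\Omega}^2+\tau(\ten K\grad\mu^n,\grad\mu^n)=\cF\vec d^n+\cG\mu^n-\bigl((\beta+M)\varphi_0+p_0,\dive\vec d^n\bigr)-(\varphi_0,\mu^n).
\]
Young's inequality against $(2\beta+M)\|\dive\vec d^n\|_{0,\Omega}^2$ costs only $(\beta+M)^2/(2\beta+M)\le\beta+M\lesssim\alpha_1$. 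Hence $\varphi_0$ enters $\|(\vec d^n,\mu^n,\varphi^n)\|_{\bV}$ with factor $\sqrt{\alpha_1/\alpha_0}$ instead of $\alpha_1/\alpha_0$, and the remaining data enter with $1/\alpha_0$. Reading off $\lambda^n$ and using $(\beta+M)\|\dive\vec d^n\|_{0,\Omega}\le\sqrt{\beta+M}\,\bigl((2\beta+M)\|\dive\vec d^n\|_{0,\Omega}^2\bigr)^{1/2}$ gives the factors $\sqrt{\alpha_1/\alpha_0}$ and $\alpha_1$. This is the stated bound, with only Korn and Poincar\'e constants hidden.
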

\begin{remark}
    The loss of robustness as $C_{\mathrm{Hooke}}^{\mathrm{max}}$ tends to infinity is an expected limitation of the proposed model, due to the deterioration of the conditioning in the nearly incompressible (or nearly rigid) regime and the consequent reduced control of the stress tensor. This behavior is well known in elasticity-type models with extreme parameter values. Such effects may be mitigated by adopting mixed formulations involving an independent approximation of the stress variable.
\end{remark}
\section{Robust and scalable numerical approximation strategy}\label{section:numerical}
In this section, we show stable choices of conforming \ac{fe} spaces for the proposed model. In addition, we show how it is possible to leverage existing iterative methods and the problem's block structure to generate scalable preconditioners.

\subsection{Discrete spaces and a priori error analysis}\label{section:spaces}
For some abstract conforming spaces $\bV^{\vec d}_h\subset \bV^{\vec d}$, $\rV^\mu_h \subset \rV^\mu$, $\rV^\varphi_h \subset \rV^\varphi$, and $\rV^\lambda_h \subset \rV^\lambda$; such that $\bV_h:= \bV^{\vec d}_h \times \rV^\mu_h \times \rV^\varphi_h \subset \bV$ and $\rQ_h:= \rV^\lambda_h\subset \rQ$ (as detailed later in this section), we observe that discrete counterpart of Lemma~\ref{lem:boundedness} follows immediately. On the other hand, to extend the inf-sup condition provided in Lemma~\ref{lem:inf-sup} in the fully-discrete setting, we observe that an additional assumption is needed, as stated below. For that, we will consider the discrete operator given by 
    \begin{equation}\label{eq:Bh}
        \langle \cB_h(\vec d_h, \mu_h, \varphi_h), \lambda_h^*\rangle \coloneqq (\lambda_h^*, \dive \vec d_h) - (\varphi_h, \lambda^*_h). 
    \end{equation}
\begin{lemma}[Discrete inf-sup condition]\label{lem:discrete_inf-sup}
    Let $\cB_h$ be defined as \eqref{eq:Bh} and $\rV^\lambda_h \subseteq \rV^\varphi_h$. Then, the following holds
\begin{gather*}
    \sup_{(\vec d_h,\mu_h,\varphi_h)\in \bV_h\setminus\{\vec 0, 0, 0\}}\frac{|\langle\cB_h(\vec d_h,\mu_h,\varphi_h),\lambda^*_h\rangle|}{\|(\vec d_h,\mu_h,\varphi_h)\|_{\bV}} \ge \|\lambda^*_h\|_{\rQ} \quad \forall \lambda^*_h\in\rQ_h.
\end{gather*}
\end{lemma}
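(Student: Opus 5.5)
The plan is to mimic the continuous argument of Lemma~\ref{lem:inf-sup} verbatim at the discrete level, exploiting the inclusion $\rV^\lambda_h \subseteq \rV^\varphi_h$ to build an explicit test triple. Fix an arbitrary $\lambda^*_h \in \rQ_h$ with $\lambda^*_h \neq 0$; the case $\lambda^*_h = 0$ is trivial. Because $\langle \cB_h(\vec d_h,\mu_h,\varphi_h),\lambda^*_h\rangle$ defined in \eqref{eq:Bh} does not involve $\mu_h$, we may set $\mu_h = 0$. We also take $\vec d_h = \vec 0 \in \bV^{\vec d}_h$ and $\varphi_h = -\lambda^*_h$.

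The only point requiring care, and the sole place the hypothesis enters, is admissibility of this choice. The triple $(\vec 0, 0, -\lambda^*_h)$ belongs to $\bV_h$ precisely because $\lambda^*_h \in \rV^\lambda_h \subseteq \rV^\varphi_h$, and $\rV^\varphi_h$ is a linear space. Without the inclusion one would instead have to use the displacement (a discrete Stokes-type inf-sup for $\dive$), but here no such condition is needed. The triple is nonzero since $\lambda^*_h \ne 0$.

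Next compute the numerator and the norm. The numerator is
\[
|(\lambda^*_h,\dive \vec 0) - (-\lambda^*_h,\lambda^*_h)| = \|\lambda^*_h\|_{0,\Omega}^2 ,
\]
and the norm is
\[
\|(\vec 0,0,-\lambda^*_h)\|_{\bV} = \|\lambda^*_h\|_{0,\Omega} = \|\lambda^*_h\|_{\rQ}.
\]
Bounding the supremum from below by this particular quotient gives $\|\lambda^*_h\|_{\rQ}$. This yields the stated inequality with constant exactly $1$, independent of $h$ and of all physical parameters.
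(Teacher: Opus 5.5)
Your proposal is correct and matches the paper's proof: the paper repeats the continuous argument of Lemma~\ref{lem:inf-sup}, choosing $\vec d_h = \vec 0$ and $\varphi_h = -\lambda^*_h$. This choice is admissible precisely because $\rV^\lambda_h \subseteq \rV^\varphi_h$, and it gives the bound with constant $1$. Your explicit choice $\mu_h = 0$ and your remark on the trivial case $\lambda^*_h = 0$ only spell out details the paper leaves implicit.
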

\begin{proof}
    In virtue of the condition $\rV^\lambda_h \subseteq \rV^\varphi_h$, the proof follows exactly as in Lemma~\ref{lem:inf-sup}, further details are omitted. 
\end{proof}
Notice that the abstract setting we propose does not allow us to characterize the discrete kernel of $\cB$ as in Lemma~\ref{lem:ker}. However, the coercivity of $\cA$ in the discrete kernel still holds.
\begin{lemma}[Discrete coercivity of $\cA$]\label{lem:discrete_coercivity}
Let $\rV^\varphi_h \subseteq \rV^\lambda_h$. Then, the following bound holds
\begin{multline*}
    \langle \cA(\vec d_h,\mu_h,\varphi_h), (\vec d_h,\mu_h,\varphi_h) \rangle \\
        \gtrsim \min\left\{\frac{\rho_0}{\tau^2},C_\mathrm{Hooke}^{\mathrm{min}},2\beta,\tau \ten K^{\mathrm{min}},M\right\} \|(\vec d_h,\mu_h,\varphi_h)\|_\bV^2 \quad \forall (\vec d_h,\mu_h,\varphi_h) \in \ker(\cB_h).
\end{multline*}
\end{lemma}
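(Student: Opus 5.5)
The plan is to reduce the discrete case to the continuous argument of Lemma~\ref{lem:coercivity}, using the assumption $\rV^\varphi_h \subseteq \rV^\lambda_h$ to recover the pointwise identity $\dive \vec d_h = \varphi_h$ in a weak, projected form that suffices for the computation. Let $(\vec d_h,\mu_h,\varphi_h)\in\ker(\cB_h)$, so that
\[
(\dive \vec d_h - \varphi_h, \lambda_h^*) = 0 \quad \forall \lambda_h^*\in \rV^\lambda_h .
\]
Since $\varphi_h\in\rV^\varphi_h\subseteq\rV^\lambda_h$, we may test with $\lambda_h^*=\varphi_h$. This gives $(\dive\vec d_h,\varphi_h) = (\varphi_h,\varphi_h)$. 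In general $\dive\vec d_h\neq\varphi_h$ pointwise, but only this tested identity will be needed.

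Next we expand $\langle \cA(\vec d_h,\mu_h,\varphi_h),(\vec d_h,\mu_h,\varphi_h)\rangle$ using the definitions of $\cA_1,\cA_2,\cA_3,\cB_1,\cC$. This produces the diagonal terms
\[
\frac{\rho_0}{\tau^2}\|\vec d_h\|_0^2+(\mathbb C_\mathrm{Hooke}:\ten\varepsilon(\vec d_h),\ten\varepsilon(\vec d_h))+\tau(\ten K\grad\mu_h,\grad\mu_h)+M\|\varphi_h\|_0^2 ,
\]
together with the coupling terms.

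The coupling terms between $\mu_h$ and $\varphi_h$ are $(\varphi_h,\mu_h)$ and $-(\mu_h,\varphi_h)$. They cancel identically, because of the skew sign structure in \eqref{eq:operator_form}, and this requires no kernel property. The two terms $\beta(\varphi_h,\dive\vec d_h)$, coming from $\cB_1^*$ and $\cB_1$, are both of the form $\beta(\dive\vec d_h,\varphi_h)$. Because $\varphi_h$ is an admissible test function in $\rV^\lambda_h$, the tested identity replaces them by $2\beta\|\varphi_h\|_0^2$. This is exactly the step where the inclusion $\rV^\varphi_h\subseteq\rV^\lambda_h$ enters. We then obtain the same expression as in the proof of Lemma~\ref{lem:coercivity}:
\[
\frac{\rho_0}{\tau^2}\|\vec d_h\|_0^2+\mathbb C^{\min}\|\ten\varepsilon(\vec d_h)\|_0^2+\tau\ten K^{\min}|\mu_h|_1^2+(2\beta+M)\|\varphi_h\|_0^2 .
\]

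To conclude, the $\|\vec d_h\|_1^2$ and $\|\mu_h\|_1^2$ contributions must be bounded below. Korn's inequality on $\bH^1_0(\Omega)$ gives $\|\ten\varepsilon(\vec d_h)\|_0\gtrsim\|\vec d_h\|_1$, and the Poincaré inequality on $\rH^1_0(\Omega)$ gives $|\mu_h|_1\gtrsim\|\mu_h\|_1$. Both constants are independent of $h$ because the spaces are conforming. Taking the minimum of the coefficients then yields the stated bound with the same $\min\{\cdot\}$ as in Lemma~\ref{lem:coercivity}.

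The only real obstacle is that the discrete kernel cannot be characterized pointwise, unlike Lemma~\ref{lem:ker}. The key is to notice that the cross terms only ever pair $\dive\vec d_h$ with $\varphi_h$ itself, so testing the kernel relation with $\varphi_h$ is enough. Combined with Lemma~\ref{lem:discrete_inf-sup}, the two inclusions together force $\rV^\lambda_h=\rV^\varphi_h$, which is consistent with the lowest order pairings one would choose.
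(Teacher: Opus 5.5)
Your proposal is correct and follows the paper's proof: you test the discrete kernel relation with $\lambda_h^*=\varphi_h$, which is admissible since $\rV^\varphi_h\subseteq\rV^\lambda_h$, to get $(\dive\vec d_h,\varphi_h)=\|\varphi_h\|_{0,\Omega}^2$, and then repeat the computation of Lemma~\ref{lem:coercivity}. Your extra details are accurate and fill in steps the paper leaves implicit: the $\mu_h$--$\varphi_h$ coupling terms cancel identically (not because of the kernel, as the paper suggests), and Korn's and Poincar\'e's inequalities are needed to reach the full $\bV$-norm.
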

\begin{proof}
    From the definition of $\ker(\cB_h)$, we obtain that
    \begin{align*}
        \ker(\cB_h) &= \{(\vec d_h,\mu_h,\varphi_h)\in \bV_h: (\lambda^*_h, \dive \vec d_h) - (\varphi_h, \lambda^*_h)=0\quad \forall \lambda^*_h\in\rQ_h \}\\
        &=  \{(\vec d_h,\mu_h,\varphi_h)\in \bV_h:   (\dive \vec d, \lambda^*)=(\varphi_h, \lambda^*_h)\quad \forall \lambda^*_h\in\rQ_h \}.
    \end{align*}
    Then, the assumption $\rV^\varphi_h \subseteq \rV^\lambda_h$ allow us to assert that for all $(\vec d_h,\mu_h,\varphi_h)\in \ker(\cB_h)$ we have $(\dive \vec d, \varphi_h)=(\varphi_h, \varphi_h)$. From here, the proof follows as in Lemma~\ref{lem:coercivity}. 
\end{proof}
Next, we establish the well-posedness of the fully-discrete formulation as a direct consequence of Lemmas~\ref{lem:boundedness}, and \ref{lem:discrete_inf-sup}-\ref{lem:discrete_coercivity}.
\begin{theorem}\label{th:wp_discrete}
    Given $\vec f^n\in \bL^2(\Omega)$ and $g^n,p_0,\varphi_0\in \rL^2(\Omega)$. Assume further that $\rV^\varphi_h = \rV^\lambda_h$. Then, the saddle-point formulation \eqref{eq:operator_form_simplified} restricted to the discrete setting admits a unique solution $((\vec d^n_h, \mu^n_h, \varphi^n_h), \lambda^n_h)\in \bV_h\times \rQ_h$ at each time instant $n\in \{1, ..., N\}$. Moreover, the continuous dependence on data holds, i.e.,
    \ifnum\siam=1
    \begin{multline*}\|((\vec d^n_h, \mu^n_h, \varphi^n_h), \lambda^n_h)\|_{\bV\times \rQ} \\
      \lesssim \max\left\{ \frac{1}{\alpha_0}, \frac{2\sqrt{\alpha_1}}{\sqrt{\alpha_0}},\alpha_1 \right\} \left( \|\vec f^n\|_{0,\Omega} + \tau\|g^n\|_{0,\Omega} + \|p_0\|_{0,\Omega} + \|\varphi_0\|_{0,\Omega}\right),\end{multline*}
  \else
    $$ \|((\vec d^n_h, \mu^n_h, \varphi^n_h), \lambda^n_h)\|_{\bV\times \rQ} \lesssim \max\left\{ \frac{1}{\alpha_0}, \frac{2\sqrt{\alpha_1}}{\sqrt{\alpha_0}},\alpha_1 \right\} \left( \|\vec f^n\|_{0,\Omega} + \tau\|g^n\|_{0,\Omega} + \|p_0\|_{0,\Omega} + \|\varphi_0\|_{0,\Omega}\right),$$
  \fi
    with
    $$\alpha_0 = \min\{\frac{\rho_0}{\tau^2},C_\mathrm{Hooke}^{\min},2\beta,\tau \ten K^{\mathrm{min}},M\} \quad \text{and} \quad \alpha_1 = \max\left\{\frac{\rho_0}{\tau^2},\mathbb C_{\mathrm{Hooke}}^{\mathrm{max}},2\beta,\tau\ten K^{\mathrm{max}},M\right\}.$$
\end{theorem}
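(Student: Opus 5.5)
The plan is to apply the abstract Babu\v{s}ka--Brezzi theory (cf. \cite[Theorem 4.2.3]{BoffiBrezziFortin2013}) to the discrete saddle-point problem obtained by restricting \eqref{eq:operator_form_simplified} to $\bV_h\times\rQ_h$. The hypothesis $\rV^\varphi_h=\rV^\lambda_h$ gives both inclusions at once: $\rV^\lambda_h\subseteq\rV^\varphi_h$ and $\rV^\varphi_h\subseteq\rV^\lambda_h$. Hence Lemma~\ref{lem:discrete_inf-sup} and Lemma~\ref{lem:discrete_coercivity} both apply. I will verify the abstract hypotheses in order and then read off the stability constant.

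\textbf{Boundedness.} Since $\bV_h\subset\bV$ and $\rQ_h\subset\rQ$ are conforming and carry the same norms, the bounds of Lemma~\ref{lem:boundedness} transfer verbatim to the restrictions of $\cA$, $\cB_h$, $\widetilde\cF$ and $\cI$. The relevant constants are $\|\cA\|\lesssim\alpha_1$, $\|\cB_h\|\lesssim 1$, and the data bounds.

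\textbf{Inf-sup and coercivity.} The discrete inf-sup constant equals $1$ by Lemma~\ref{lem:discrete_inf-sup}, because the choice $\varphi_h=-\lambda_h^*$ is admissible. Coercivity of $\cA$ on $\ker(\cB_h)$ with constant $\gtrsim\alpha_0$ is Lemma~\ref{lem:discrete_coercivity}. In its proof, testing the kernel condition with $\lambda_h^*=\varphi_h\in\rQ_h$ turns the terms $\beta(\varphi_h,\dive\vec d_h)$ and $\beta(\dive\vec d_h,\varphi_h)$ into $2\beta\|\varphi_h\|^2_{0}$. The $\cC$-coupling terms $\pm(\mu_h,\varphi_h)$ cancel identically.

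\textbf{Conclusion.} Brezzi's theorem then gives existence and uniqueness of $((\vec d^n_h,\mu^n_h,\varphi^n_h),\lambda^n_h)$ together with the standard estimates
\[
\|u_h\|_{\bV}\le \tfrac{1}{\alpha_0}\|\widetilde\cF\|+\tfrac{1}{\beta_0}\bigl(1+\tfrac{\alpha_1}{\alpha_0}\bigr)\|\cI\|,\qquad
\|\lambda_h\|_{\rQ}\le \tfrac{1}{\beta_0}\bigl(1+\tfrac{\alpha_1}{\alpha_0}\bigr)\|\widetilde\cF\|+\tfrac{\alpha_1}{\beta_0^2}\bigl(1+\tfrac{\alpha_1}{\alpha_0}\bigr)\|\cI\|,
\]
where $u_h=(\vec d^n_h,\mu^n_h,\varphi^n_h)$ and $\beta_0=1$. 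Inserting the data bounds $\|\widetilde\cF\|\lesssim\|\vec f^n\|_{0,\Omega}+\tau\|g^n\|_{0,\Omega}+\|p_0\|_{0,\Omega}$ and $\|\cI\|\lesssim\|\varphi_0\|_{0,\Omega}$ yields the estimate with a constant depending only on $\alpha_0$ and $\alpha_1$.

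The only delicate point is matching the constant to the stated form $\max\{1/\alpha_0,2\sqrt{\alpha_1}/\sqrt{\alpha_0},\alpha_1\}$. This form corresponds to the sharper version of the abstract estimate in \cite{BoffiBrezziFortin2013}, which uses $\|\cA\|^{1/2}/\alpha_0^{1/2}$ factors. I would cite that version directly, with $\beta_0=1$ and $\|\cB_h\|\lesssim1$, rather than rederive it, absorbing generic constants into $\lesssim$. If the factors do not match exactly, the statement holds up to the hidden constant, since all quantities are controlled by $\alpha_0^{-1}$ and $\alpha_1$. No step requires a discrete kernel characterization, which is why the equality of the two discrete spaces is exactly the needed assumption.
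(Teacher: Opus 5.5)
Your route matches the paper's. Its proof is the one-line remark that $\rV^\varphi_h=\rV^\lambda_h$ activates both Lemma~\ref{lem:discrete_inf-sup} and Lemma~\ref{lem:discrete_coercivity}, followed by an appeal to Brezzi's theorem \cite[Theorem 4.2.3]{BoffiBrezziFortin2013}. Your existence and uniqueness argument, the inf-sup constant $\beta_0=1$, and your kernel-coercivity computation are all fine.

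The step that does not hold up is the one you flag yourself: matching the constant.

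\textbf{The sharp estimate does not apply.} The version of the abstract estimate with factors $2\|\cA\|^{1/2}/(\alpha_0^{1/2}\beta_0)$ and $\|\cA\|/\beta_0^2$ gives, with $\beta_0=1$, exactly $\max\{1/\alpha_0,2\sqrt{\alpha_1}/\sqrt{\alpha_0},\alpha_1\}$. But it requires the bilinear form to be symmetric and positive semidefinite on the whole space, because its proof uses $a(u,v)\le a(u,u)^{1/2}a(v,v)^{1/2}$. Neither property holds here:
\begin{itemize}
\item $\cA$ is not symmetric. It carries the skew coupling $(\varphi_h,\mu_h^*)-(\mu_h,\varphi_h^*)$. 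These are precisely the terms you note cancel in $\langle\cA u,u\rangle$, and they are why the paper solves with GMRES.
\item Off $\ker(\cB_h)$, the term $2\beta(\dive\vec d_h,\varphi_h)$ has no sign, so $\cA$ need not even be semidefinite on $\bV_h$.
\end{itemize}
So citing that version directly is not legitimate.

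\textbf{The fallback fails too.} By the paper's convention, the constant hidden in $\lesssim$ is independent of the physical parameters. The general-case factors you display, $1+\alpha_1/\alpha_0$ and $\alpha_1(1+\alpha_1/\alpha_0)$, are not bounded by any fixed multiple of $\max\{1/\alpha_0,2\sqrt{\alpha_1/\alpha_0},\alpha_1\}$. For example, take $\alpha_0=\epsilon$ and $\alpha_1=\epsilon^{-1}$. The stated maximum is $2\epsilon^{-1}$, while $\alpha_1/\alpha_0=\epsilon^{-2}$ and $\alpha_1(1+\alpha_1/\alpha_0)\sim\epsilon^{-3}$.

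The paper's own justification has the same weakness, since it cites the same abstract theorem with the same constant and gives no further argument. What the lemmas, and therefore both your argument and the paper's, actually establish is existence, uniqueness and stability with the general Brezzi constant. That is, the estimate holds with $\max\{1/\alpha_0,\,1+\alpha_1/\alpha_0,\,\alpha_1(1+\alpha_1/\alpha_0)\}$ in place of the stated factor. Getting the stated factor would need an extra argument that exploits the specific structure of $\cA$, which neither you nor the paper provides. You should either state the weaker constant or supply such an argument.
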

Moreover, standard arguments (see, e.g.,  \cite[Theorem 2.5]{gatica2014simple}) provide the following C\'ea estimate for the total error $e:=\|((e(\vec d^n),e(\mu^n),e(\varphi^n)),e(\lambda^n))\|_{\bV\times\rQ}=\|((\vec d^n-\vec d^n_h,\mu^n-\mu^n_h,\varphi^n-\varphi^n_h),\lambda^n-\lambda^n_h)\|_{\bV\times\rQ}$.
\begin{theorem}[C\'ea estimate]\label{th:cea}
    Under the assumptions of Theorems~\ref{th:wp} and \ref{th:wp_discrete}, there holds 
    $$e \lesssim  \inf_{((\vec d^*_h,\mu^*_h,\varphi^*_h),\lambda^*_h)\in\bV_h\times \rQ_h} \|((\vec d^n-\vec d^*_h,\mu^n-\mu^*_h,\varphi^n-\varphi^*_h),\lambda^n-\lambda^*_h)\|_{\bV\times\rQ}.$$
\end{theorem}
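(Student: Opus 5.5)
The estimate follows from the standard Céa/Strang argument for conforming Galerkin approximations of well-posed linear problems. The plan is to view \eqref{eq:operator_form_simplified} as a single bounded bilinear form on $\bV\times\rQ$ and use the uniform discrete stability from Theorem~\ref{th:wp_discrete}.

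We define the global bilinear form
$$\mathbb{A}\big(((\vec d,\mu,\varphi),\lambda),((\vec d^*,\mu^*,\varphi^*),\lambda^*)\big) := \langle \cA(\vec d,\mu,\varphi),(\vec d^*,\mu^*,\varphi^*)\rangle + \langle \cB(\vec d^*,\mu^*,\varphi^*),\lambda\rangle + \langle \cB(\vec d,\mu,\varphi),\lambda^*\rangle.$$
By Lemma~\ref{lem:boundedness}, $\mathbb A$ is bounded on $(\bV\times\rQ)^2$ with constant $\lesssim \alpha_1+1$.

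Next we record the discrete global inf-sup condition on $\bV_h\times\rQ_h$. Lemmas~\ref{lem:discrete_inf-sup} and \ref{lem:discrete_coercivity}, with the assumption $\rV^\varphi_h=\rV^\lambda_h$, feed into the Babu\v{s}ka--Brezzi theory (\cite[Theorem 4.2.3]{BoffiBrezziFortin2013}). The inf-sup constant is $1$ and the kernel-coercivity constant is $\alpha_0$, both independent of $h$. This yields
$$\sup_{0\neq y_h\in\bV_h\times\rQ_h}\frac{\mathbb A(x_h,y_h)}{\|y_h\|}\ \ge\ \gamma\,\|x_h\|\qquad\forall x_h\in\bV_h\times\rQ_h,$$
with $\gamma^{-1}$ of the size of the stability constant $\max\{1/\alpha_0,2\sqrt{\alpha_1}/\sqrt{\alpha_0},\alpha_1\}$ appearing in Theorem~\ref{th:wp_discrete}.

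The argument then proceeds in three steps.
\begin{enumerate}
\item \emph{Galerkin orthogonality.} The spaces are conforming, and the discrete problem uses the same forms and right-hand sides. Subtracting the discrete equations from the continuous ones tested with $y_h$ gives $\mathbb A(x-x_h,y_h)=0$ for all $y_h$, where $x$ and $x_h$ are the continuous and discrete solutions.
\item \emph{Stability applied to the discrete difference.} For an arbitrary $z_h=((\vec d^*_h,\mu^*_h,\varphi^*_h),\lambda^*_h)$, apply the discrete inf-sup to $x_h-z_h$ and use orthogonality:
$$\gamma\|x_h-z_h\|\le \sup_{y_h}\frac{\mathbb A(x_h-z_h,y_h)}{\|y_h\|}=\sup_{y_h}\frac{\mathbb A(x-z_h,y_h)}{\|y_h\|}\le \|\mathbb A\|\,\|x-z_h\|.$$
\item \emph{Conclusion.} The triangle inequality gives $e\le(1+\|\mathbb A\|/\gamma)\|x-z_h\|$. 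Taking the infimum over $z_h$ yields the stated estimate, where the hidden constant depends on $\alpha_0$ and $\alpha_1$ but not on $h$.
\end{enumerate}

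The only delicate point is that the discrete stability must be uniform in $h$. This is exactly where the hypothesis $\rV^\varphi_h=\rV^\lambda_h$ enters: it gives the discrete inf-sup with constant $1$ and the coercivity on $\ker(\cB_h)$, even though $\ker(\cB_h)\not\subset\ker(\cB)$ in general. Otherwise the argument is routine.
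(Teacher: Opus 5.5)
Your proposal is correct and follows the same route as the paper. The paper gives no detailed proof: it simply invokes the standard C\'ea estimate for Galerkin schemes with an $h$-uniform discrete inf-sup (citing \cite[Theorem 2.5]{gatica2014simple}), on top of Lemma~\ref{lem:boundedness} and Lemmas~\ref{lem:discrete_inf-sup}--\ref{lem:discrete_coercivity}, and your three steps (Galerkin orthogonality with the same data $\vec f^n, g^n, p_0, \varphi_0$ in both problems, stability applied to $x_h - z_h$, and the triangle inequality) are exactly the content of that cited result.
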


\begin{remark}
    Thanks to Lemma~\ref{lem:discrete_coercivity}, we note that there is no need to satisfy an standard inf-sup condition for the $(\dive \vec d^n_h, \varphi^*_h)$ form. Consequently, the lowest-order \ac{fe} pairs $\bP_1^{\text{cont}}$-$\rP_1^{\text{disc}}$ or $\bP_1^{\text{cont}}$-$\rP_0^{\text{disc}}$ may be utilized for the discrete space $\bV^{\vec{d}}_h \times \rV^\varphi_h$. In these instances, the associated test space $\rV^\lambda_h$ must be chosen as continuous piece-wise linear polynomials ($\rP_1^{\text{cont}}$) or discontinuous piece-wise constant polynomials ($\rP_0^{\text{disc}}$), respectively. Alternatively, classical inf-sup stable pairs such as the MINI or Taylor-Hood elements may be employed; for both choices, the corresponding test space $\rV^\lambda_h$ is the space of continuous piecewise linear polynomials. While the definition of $\rV^\mu_h$ remains arbitrary, we practice numerical consistency by choosing a discrete space with the same approximation properties to those used for the other three variables.
\end{remark}

We finalise by specifying the discrete spaces as $\bV^{\vec d}_h=\mathbf{P}_{k+1}^\text{cont}$, $\rV^\mu_h=\mathrm{P}_{k+1}^\text{cont}$, $\rV^\varphi_h=\mathrm{P}_{k}^\text{disc}$, and $\rV^\lambda_h=\mathrm{P}_{k}^\text{disc}$ with $k\geq 0$. Here $\mathbf{P}_{k+1}^\text{cont}$ (resp. $\mathrm{P}_{k}^\text{cont}$) is the set of vector (resp. scalar) valued continuous polynomials of order $k+1$ and $\mathrm{P}_{k}^\text{disc}$ is the set of scalar valued discontinuous polynomials of order $k$. Moreover, we provide the convergence rates of the scheme with this specification.
\begin{theorem}\label{th:cv}
    Under the assumptions of Theorem~\ref{th:cea}. Assume further that $\vec d^n \in \bH^{s+1}(\Omega)$, $\mu^n \in \rH^{s+1}(\Omega)$, $\varphi^n \in \rH^{s}(\Omega)$, and $\lambda^n \in \rH^{s}(\Omega)$ with $0<s\leq k+1$. Then, the following estimate holds:
    $$e\lesssim h^s \left( |\vec d^n|_{s+1,\Omega}  + |\mu^n|_{s+1,\Omega} + |\varphi^n|_{s,\Omega} + |\lambda^n|_{s,\Omega}\right).$$
\end{theorem}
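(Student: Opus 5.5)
The plan is to combine the C\'ea estimate of Theorem~\ref{th:cea} with standard interpolation estimates for the chosen spaces, component by component. Since the infimum in Theorem~\ref{th:cea} runs over all of $\bV_h\times\rQ_h$, and the norm $\|\cdot\|_{\bV\times\rQ}$ is a sum of squares of the individual component norms, we may bound it by choosing one specific approximation per variable. Note that the choice $\rV^\varphi_h=\rV^\lambda_h=\rP_k^{\text{disc}}$ meets the hypothesis $\rV^\varphi_h=\rV^\lambda_h$ of Theorem~\ref{th:wp_discrete}, so Theorem~\ref{th:cea} applies directly. The resulting bound is then
\begin{equation*}
e\lesssim \inf_{\vec d^*_h}\|\vec d^n-\vec d^*_h\|_{1,\Omega}+\inf_{\mu^*_h}\|\mu^n-\mu^*_h\|_{1,\Omega}+\inf_{\varphi^*_h}\|\varphi^n-\varphi^*_h\|_{0,\Omega}+\inf_{\lambda^*_h}\|\lambda^n-\lambda^*_h\|_{0,\Omega}.
\end{equation*}

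For the displacement and the total pressure, which lie in $\bH^1_0(\Omega)$ and $\rH^1_0(\Omega)$, the approximants will be a Scott--Zhang quasi-interpolant onto $\bP_{k+1}^{\text{cont}}$ and $\rP_{k+1}^{\text{cont}}$, respectively. This operator preserves homogeneous Dirichlet boundary values, so the approximants lie in $\bV^{\vec d}_h$ and $\rV^\mu_h$. It is also well defined for the low regularity allowed here (with $s>0$ arbitrarily small, $\bH^{s+1}$ need not embed into continuous functions, so the Lagrange interpolant is unavailable). Its stability and the Bramble--Hilbert lemma on shape-regular meshes give
\begin{equation*}
\|\vec d^n-\cI_h\vec d^n\|_{1,\Omega}\lesssim h^{s}|\vec d^n|_{s+1,\Omega},\qquad 0<s\le k+1,
\end{equation*}
and the same bound holds for $\mu^n$. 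For fractional $s$ we interpolate between the integer cases, or rely on the fractional-order Bramble--Hilbert estimate.

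For $\varphi^n$ and $\lambda^n$ the approximant will be the elementwise $\rL^2$ projection $\Pi_h^k$ onto $\rP_k^{\text{disc}}$. Since this space has no continuity or boundary constraints, we obtain $\|\varphi^n-\Pi_h^k\varphi^n\|_{0,\Omega}\lesssim h^{s}|\varphi^n|_{s,\Omega}$ for $0<s\le k+1$, and the analogous estimate for $\lambda^n$. Summing the four bounds yields the claim. The hidden constant depends on the constant of Theorem~\ref{th:cea}, and hence on $\alpha_0,\alpha_1$, but not on $h$.

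The only delicate point is the fractional and low-regularity range of $s$. This is handled by quasi-interpolation together with operator interpolation, so I expect no genuine obstacle: the proof is a routine assembly of Theorem~\ref{th:cea} and standard approximation theory.
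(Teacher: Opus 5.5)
Your proposal is correct and follows the route the paper takes implicitly. The paper gives no separate proof of Theorem~\ref{th:cv}; the result is meant to follow by inserting standard approximation estimates for $\bP_{k+1}^{\text{cont}}$, $\rP_{k+1}^{\text{cont}}$ and $\rP_k^{\text{disc}}$ into the C\'ea estimate of Theorem~\ref{th:cea}, with $\rV^\varphi_h=\rV^\lambda_h$ guaranteeing that theorem applies, and your use of Scott--Zhang quasi-interpolation and the elementwise $\rL^2$ projection to cover the low and fractional regularity range is a sound way to carry that out.
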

   
\subsection{Robust preconditioner formulation}\label{section:preconditioner}
We propose using a block partitioned preconditioner for this problem, and in particular we use the Schur complement approximation  produced by the fixed stress method \cite{settari1998coupled}. Indeed, the fixed stress iteration is an Uzawa iteration \cite{storvik2020fixed} which can be used as a preconditioner \cite{white2016block}, where we observe that the following approximation is used for the Schur complement: 
    $$ -\dive \left (\sigma(\bullet)\right )^{-1} \grad \approx \beta_{FS} I, $$
    for some problem dependent constant $\beta_{FS}$ (see \cite{both2017robust} for analytical estimates in Biot). We refer to this approximation as the \emph{fixed-stress approximation}, and consider in what follows that $\beta_{FS}=2\mu_s / \text{dim} + \lambda_s$ \cite{both2017robust}, where $\mu_s,\lambda_s$ are the Lamè parameters. Note that the constant $\beta_{FS}$ is necessarily positive, see \cite{both2022iterative} for an example with a negative stabilization constant. Considering the partition of variables into $\rV^d$ and $\rV^\mu\times \rV^\varphi \times \rV^\lambda$, we obtain the following Schur complement block:
    \ifnum\siam=1
    \begin{multline*} \ten S = \underbrace{\begin{bmatrix}\mathcal A_2 & \mathcal C^* & \ten 0 \\ -\mathcal C & \mathcal A_3 & \mathcal D \\ \ten 0 & \mathcal D & \ten 0 \end{bmatrix}}_{\eqqcolon\mathcal H}  - \begin{bmatrix} 0\\ \mathcal B_1 \\ \mathcal B_2 \end{bmatrix} (\mathcal A_1)^{-1} \begin{bmatrix} 0 & \mathcal B_1^* & \mathcal B_2^* \end{bmatrix} \\
    = \mathcal H - \begin{bmatrix} 0\\ \beta\mathcal B \\ \mathcal B \end{bmatrix} (\mathcal A_1)^{-1} \begin{bmatrix} 0 & \beta\mathcal B^* & \mathcal B^* \end{bmatrix}, 
    \end{multline*}
\else
    $$ \ten S = \underbrace{\begin{bmatrix}\mathcal A_2 & \mathcal C^* & \ten 0 \\ -\mathcal C & \mathcal A_3 & \mathcal D \\ \ten 0 & \mathcal D & \ten 0 \end{bmatrix}}_{\eqqcolon\mathcal H}  - \begin{bmatrix} 0\\ \mathcal B_1 \\ \mathcal B_2 \end{bmatrix} (\mathcal A_1)^{-1} \begin{bmatrix} 0 & \mathcal B_1^* & \mathcal B_2^* \end{bmatrix} = \mathcal H - \begin{bmatrix} 0\\ \beta\mathcal B \\ \mathcal B \end{bmatrix} (\mathcal A_1)^{-1} \begin{bmatrix} 0 & \beta\mathcal B^* & \mathcal B^* \end{bmatrix}, $$
\fi
    where we have defined $\mathcal B \coloneqq \mathcal B_2$ and used that $\mathcal B_1 = \beta \mathcal B_2$.  We focus on the second term now, which is where we will obtain the blocks for the fixed-stress approximation $\mathcal B\mathcal A_1^{-1}\mathcal B^* \approx \beta_{FS}\mathcal I$: 
    \ifnum\siam=1
\begin{multline}
\begin{bmatrix} 0\\ \beta \mathcal B \\ \mathcal B\end{bmatrix} (\mathcal A_1)^{-1} \begin{bmatrix} 0 & \beta \mathcal B^* & \mathcal B^* \end{bmatrix} 
    = \begin{bmatrix} 0\\ \beta \mathcal B \\ \mathcal B\end{bmatrix} \begin{bmatrix} 0 & \beta (\mathcal A_1)^{-1}\mathcal B^* & (\mathcal A_1)^{-1} \mathcal B^* \end{bmatrix}  \\
    = \begin{bmatrix} 0 & 0 & 0\\ 0 & \beta^2 \mathcal S_2 & \beta \mathcal S_2 \\ 0 & \beta \mathcal S_2 & \mathcal S_2 \end{bmatrix},
\end{multline}
\else
    $$ \begin{bmatrix} 0\\ \beta \mathcal B \\ \mathcal B\end{bmatrix} (\mathcal A_1)^{-1} \begin{bmatrix} 0 & \beta \mathcal B^* & \mathcal B^* \end{bmatrix} 
    = \begin{bmatrix} 0\\ \beta \mathcal B \\ \mathcal B\end{bmatrix} \begin{bmatrix} 0 & \beta (\mathcal A_1)^{-1}\mathcal B^* & (\mathcal A_1)^{-1} \mathcal B^* \end{bmatrix}  \\
    = \begin{bmatrix} 0 & 0 & 0\\ 0 & \beta^2 \mathcal S_2 & \beta \mathcal S_2 \\ 0 & \beta \mathcal S_2 & \mathcal S_2 \end{bmatrix}, $$
\fi
where we used the notation $\mathcal S_2 \coloneqq \mathcal B (\mathcal A_1)^{-1} \mathcal B^*$. Owing to the fixed-stress approximation $\mathcal S_2 \approx \beta_{FS} I$, we obtain that a spectrally equivalent (in practice) block for the global Schur complement problem is 
    \begin{equation}\label{eq:schur}
        \ten S \approx \mathcal H - \beta_{FS} \begin{bmatrix} 0 & 0 & 0\\ 0 & \beta^2 \ten I & \beta \ten I \\ 0 & \beta \ten I & \ten I \end{bmatrix}, 
    \end{equation} 
for some constant $\beta_{FS}$. This translates into adding the following bilinear form to the preconditioner operator:
    \begin{equation}\label{eq:schur-equivalent}
    s((\varphi_h, \lambda_h), (\varphi_h^*, \lambda_h^*)) \coloneqq \beta_{FS}(\beta \varphi_h + \lambda_h, \beta \varphi_h^*+ \lambda_h^*).
    \end{equation}

\section{Numerical tests}\label{section:tests}
In this section we propose numerical tests that validate  our claims regarding the convergence of the chosen \ac{fe} spaces and the scalability of the solution strategy proposed. For this aim, we use the swelling test as in \cite{both2022iterative}, which consists in a porous medium with a constant pressure on one side. To allow for an easier comparison with other works, we use the parameters from the benchmark provided in \cite{anselmann2023benchmark}. We thus solve \eqref{eq:weak-form} on the unit cube $\Omega=(0,1)^3$ with $\vec f=\vec 0$, $\Theta = 0$, and boundary conditions given as follows: The solid is allowed to slide, so $d_\zeta = 0$ on $\zeta=0$ for $\zeta \in\{x,y,z\}$, with the tangential components being assumed to have homogeneous Neumann conditions. All the other sides are stress free. Regarding the pressure, we set it to $1\,\texttt{kPa}$ on the left $(x=0)$ and front $(y=0)$;  and 0 on the right $(x=1)$ and back $(y=1)$. On the other sides we set homogeneous Neumann conditions, which imply that fluid does not leave the geometry through those surfaces in virtue of the Darcy law: $\vec u_f = -\ten K \grad \mu$. 

Given that we can recast the  parameters used herein through Biot parameters, we use the standard Biot parameters as shown in Table~\ref{table:swelling-params}, where we note that the Young modulus and Poisson ratio give the Lamé parameters according to the relations {$ \hat{\mu} = \frac{E}{2(1+\nu)}$ and $\hat{\lambda} = \frac{E\nu}{(1+\nu)(1 - 2\nu)}$}. Note also that the reference pressure and porosity are not typical in Biot models, and indeed the Biot--Willis coefficient is understood to depend on the porosity. We leave these modeling aspects for future investigations, and simply consider them as additional parameters.

We thus propose three different tests:
    \begin{itemize}
        \item A comparison test where we verify that our model is equivalent to Biot's model using a Mandel test.
        \item A set of convergence tests where we validate the proposed convergence theory.
        \item A set of preconditioning tests where we validate our block preconditioners with fixed-stress approximation. All tests are done with respect to varying \ac{dofs} to verify optimality.
    \end{itemize}
All codes were developed using the Firedrake library \cite{rathgeber2016firedrake}.

\begin{table}[t!]
    \centering
    \begin{tabular}{c||c|c}
    \toprule
    Parameter     & Symbol      & Value  \\ \midrule
    Density       & $\rho_0$    & 1\\
    Young Modulus & $E$         & $2\cdot 10^4$  \\
    Poisson ratio & $\nu$       & 0.3 \\
    Biot--Willis   & $\alpha$    & 0.9    \\
    Permeability  & $\ten K$    & $\ten I$ \\
    Biot modulus  & $M$         & $100$   \\ 
    Reference pressure & $p_0$  & 0 \\
    Reference porosity & $\varphi_0$ & 0.1 \\ \bottomrule
    \end{tabular}
    \caption{Parameters used in swelling test.}
    \label{table:swelling-params}
\end{table}

\subsection{Equivalence with the standard incompressible Biot model}
\label{sec:mandel_equivalence}

To numerically verify the exact mathematical reduction presented in Section~\ref{section:parameters}, we consider a steady-state analogue of the classic Mandel's problem. The physical domain is a two-dimensional block $\Omega = (0, 2) \times (0, 1)$. A uniform downward compressive traction $\mathbf{T} = (0, -1000)$ is applied on the top boundary. We impose roller boundary conditions on the left and bottom edges ($d_x = 0$ at $x=0$, and $d_y = 0$ at $y=0$), while fluid drainage is permitted exclusively through the right boundary ($\mu = 0$ at $x=2$). All other boundaries are assumed impermeable.

To strictly satisfy the inf-sup condition in the fully incompressible limit in both models without resorting to artificial stabilization, we use Taylor--Hood \ac{fe} spaces ($\bP_2^{cont}$ for the displacement vector and $\rP_1^{cont}$ for the scalar fields $\mu, \varphi$, and $\lambda$). The stability of this choice is a consequence of Theorem~\ref{th:cea}. The resulting system was solved using an LU factorization on both models.

Figure \ref{fig:mandel_comparison} illustrates the pressure fields obtained using the proposed four-field formulation ($\mu$) and the standard two-field Biot model ($p$). The spatial distribution and magnitudes are visually indistinguishable.

\begin{figure}[htbp]
    \centering
    \begin{subfigure}{0.48\textwidth}
        \centering
        \includegraphics[width=\linewidth]{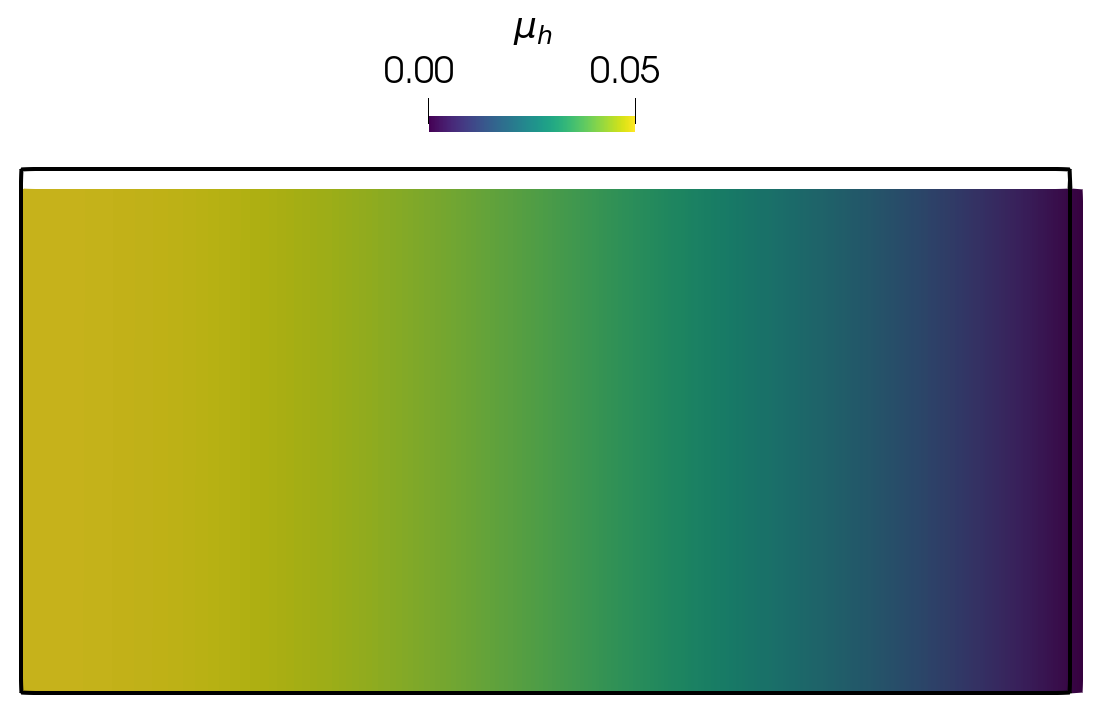}
        \caption{Proposed model ($\mu$)}
        \label{fig:mandel_mu}
    \end{subfigure}
    \hfill
    \begin{subfigure}{0.48\textwidth}
        \centering
        \includegraphics[width=\linewidth]{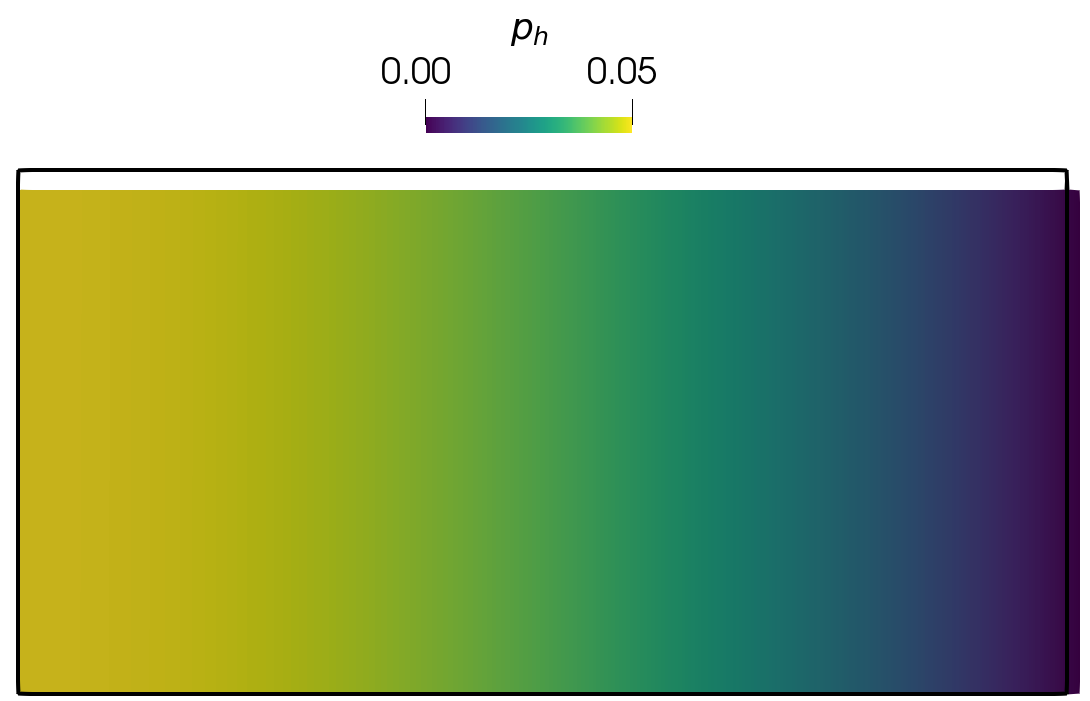}
        \caption{Standard Biot model ($p$)}
        \label{fig:mandel_p}
    \end{subfigure}
    \caption{Comparison of the pressure fields in the steady-state Mandel test. The solutions obtained from both models are visually identical.}
    \label{fig:mandel_comparison}
\end{figure}

To rigorously quantify this equivalence, we compute the $\rL^2$-norm of the difference between the primary variables of the proposed model $(\vec d, \mu)$ and those of the standard Biot model $(\vec u, p)$ over the computational mesh. As reported in Table \ref{tab:mandel_errors}, the discrepancies are on the order of machine precision. This confirms that the proposed linearized formulation and the fully incompressible standard Biot model are equivalent.

\begin{table}[htbp]
    \centering
    \begin{tabular}{l|c}
        \toprule
        \textbf{Variable} & \textbf{$\rL^2$-norm difference} \\
        \midrule
        Displacement ($\|\vec d - \vec u\|_{0,\Omega}$) & $3.14 \times 10^{-15}$ \\
        Pressure ($\|\mu - p\|_{0,\Omega}$)   & $1.06 \times 10^{-14}$ \\
        \bottomrule
    \end{tabular}
    \caption{$\rL^2$ difference between the proposed model and the standard incompressible Biot model for the steady Mandel test.}
    \label{tab:mandel_errors}
\end{table}

\subsection{Convergence rates}
First, we evaluate numerically the convergence in space focusing on the steady case. For this we consider the   domain $\Omega = (0,1)\times(0,1)\times(0,2)$ and manufactured displacement, porosity, and Lagrange multipliers as follows 
\begin{gather*}
\vec d(x,y,z) = \frac14\begin{pmatrix}
    \sin(x)\cos(y)\sin(\frac12z)  \\
    -2\cos(x)\sin(y)\cos(\frac12z)\\
    2\cos(x)\cos(y)\sin(\frac12z)
\end{pmatrix}, \quad 
\mu(x,y,z) = \sin(x)\cos(y)\sin(\frac12z),\\ 
\varphi(x,y,z) = e^{-x}\sin(y)\cos(\frac12z), 
\quad 
\lambda(x,y,z) = \cos(x) e^{-(y+\frac12z)}.
\end{gather*}
These smooth solutions are used to construct non-homogeneous boundary data $\vec d_{\partial\Omega}$ and $\mu_{\partial\Omega}$ as well as manufactured smooth forcing $\vec f$, mass source $g$, reference pressure $p_0$ and reference volume fraction $\varphi_0$. We take model parameters as $\rho_0 = 1$, $E = 1$, $\nu = \frac14$, $\alpha =0.9$, $M=1$, $\ten K = \ten I$, and construct successively refined partitions of $\Omega$ into tetrahedral elements. We compute approximate solutions using $\mathbf{P}_{k+1}^\text{cont}-\mathrm{P}_{k+1}^\text{cont}-\mathrm{P}_k^\text{disc}-\mathrm{P}_k^\text{disc}$ \ac{fe} with $k\in \{0,1\}$. The error history is reported in Table~\ref{table:convergence-space} using the natural norm for each variable, i.e.  the $\bH^1$ norm for displacement error, the $\mathrm{H}^1$ norm for the error in $\mu$, and the $\rL^2$ norm for the remaining errors. The experimental order of convergence is computed as $\texttt{EoC}  =\log(e{(\bullet)}/\tilde{e}{(\bullet)})[\log(h/\tilde{h})]^{-1}$,
where $e,\tilde{e}$ denote errors generated on two
consecutive  meshes of sizes $h$ and~$\tilde{h}$, respectively. 
The results show convergence rates of $O(h^{k+1})$ for all unknowns, and we also show approximate solutions--computed with the second-order method--in Figure~\ref{fig:convergence-space}. 

\begin{table}[t!]
    \centering
   { \begin{tabular}{r|c||c|c|c|c|c|c|c|c}
    \toprule
    \texttt{DoFs}     & $h$      & $e(\vec d)$ & \texttt{EoC}  & $e(\mu)$ & \texttt{EoC}  & $e(\varphi)$ & \texttt{EoC}  & $e(\lambda)$ & \texttt{EoC}  \\ 
    \midrule 
\multicolumn{10}{c}{$k=0$} \\
\midrule 
   72 & 1.7321 & 3.2e-01 & $\star$ & 3.4e-01 & $\star$ & 2.3e-01 & $\star$ & 4.5e-01 & $\star$ \\ 
   372 & 0.8660 & 1.6e-01 & 1.01 & 1.7e-01 & 0.96 & 1.2e-01 & 0.94 & 2.3e-01 & 0.95 \\
  2436 & 0.4330 & 8.0e-02 & 1.00 & 8.7e-02 & 0.99 & 6.2e-02 & 0.98 & 1.2e-01 & 0.99 \\
 17796 & 0.2165 & 4.0e-02 & 1.00 & 4.4e-02 & 1.00 & 3.1e-02 & 0.99 & 5.9e-02 & 1.00 \\
136452 & 0.1083 & 2.0e-02 & 1.00 & 2.2e-02 & 1.00 & 1.5e-02 & 1.00 & 2.9e-02 & 1.00 \\
1069572 & 0.0541 & 9.9e-03 & 1.00 & 1.1e-02 & 1.00 & 7.7e-03 & 1.00 & 1.5e-02 & 1.00 \\
\midrule 
\multicolumn{10}{c}{$k=1$} \\
\midrule 
   276 & 1.7321 & 4.7e-02 & $\star$ & 8.6e-02 & $\star$ & 3.1e-02 & $\star$ & 5.5e-02 & $\star$ \\ 
  1668 & 0.8660 & 1.3e-02 & 1.83 & 2.2e-02 & 2.00 & 9.1e-03 & 1.75 & 1.5e-02 & 1.87 \\
 11652 & 0.4330 & 3.4e-03 & 1.95 & 5.4e-03 & 2.00 & 2.4e-03 & 1.93 & 3.9e-03 & 1.94 \\
 87300 & 0.2165 & 8.5e-04 & 2.01 & 1.3e-03 & 2.00 & 6.1e-04 & 1.97 & 1.0e-03 & 1.96 \\
676356 & 0.1083 & 2.1e-04 & 2.01 & 3.4e-04 & 2.00 & 1.5e-04 & 1.99 & 2.5e-04 & 1.99 \\
   \bottomrule
    \end{tabular}}
    \caption{{Error history against smooth manufactured solutions in 3D, using  the method $\mathbf{P}_{k+1}^\text{cont}-\mathrm{P}_{k+1}^\text{cont}-\mathrm{P}_k^\text{disc}-\mathrm{P}_k^\text{disc}$ with  $k=0,1$. 
    We tabulate in $e(\bullet)$ the error in the $\bV^\bullet$ norm for each variable,  and the corresponding experimental order of convergence (\texttt{EoC}).}}
    \label{table:convergence-space}
\end{table}

\begin{figure}[t!]
    \centering
    \includegraphics[width=0.325\linewidth]{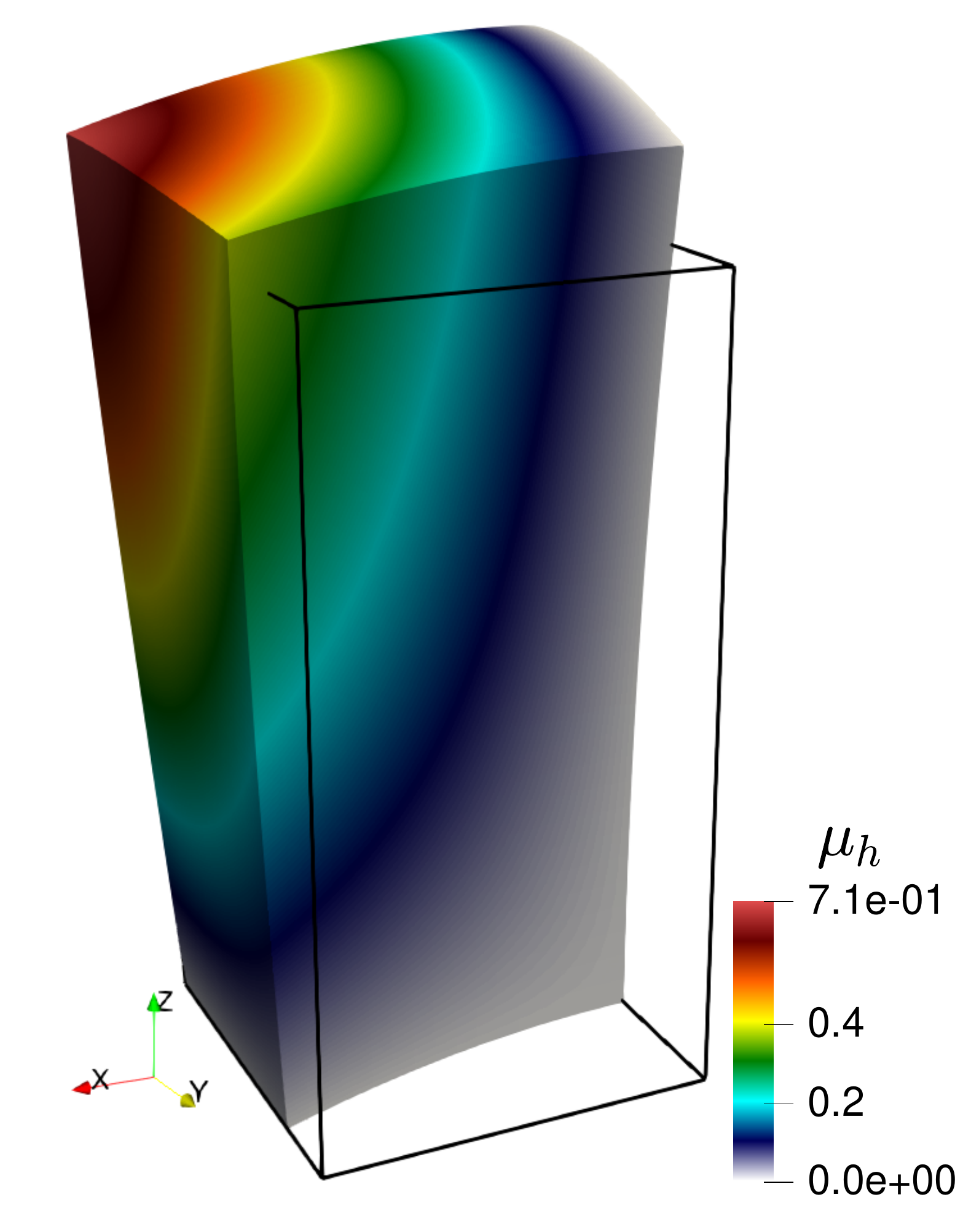}
    \includegraphics[width=0.325\linewidth]{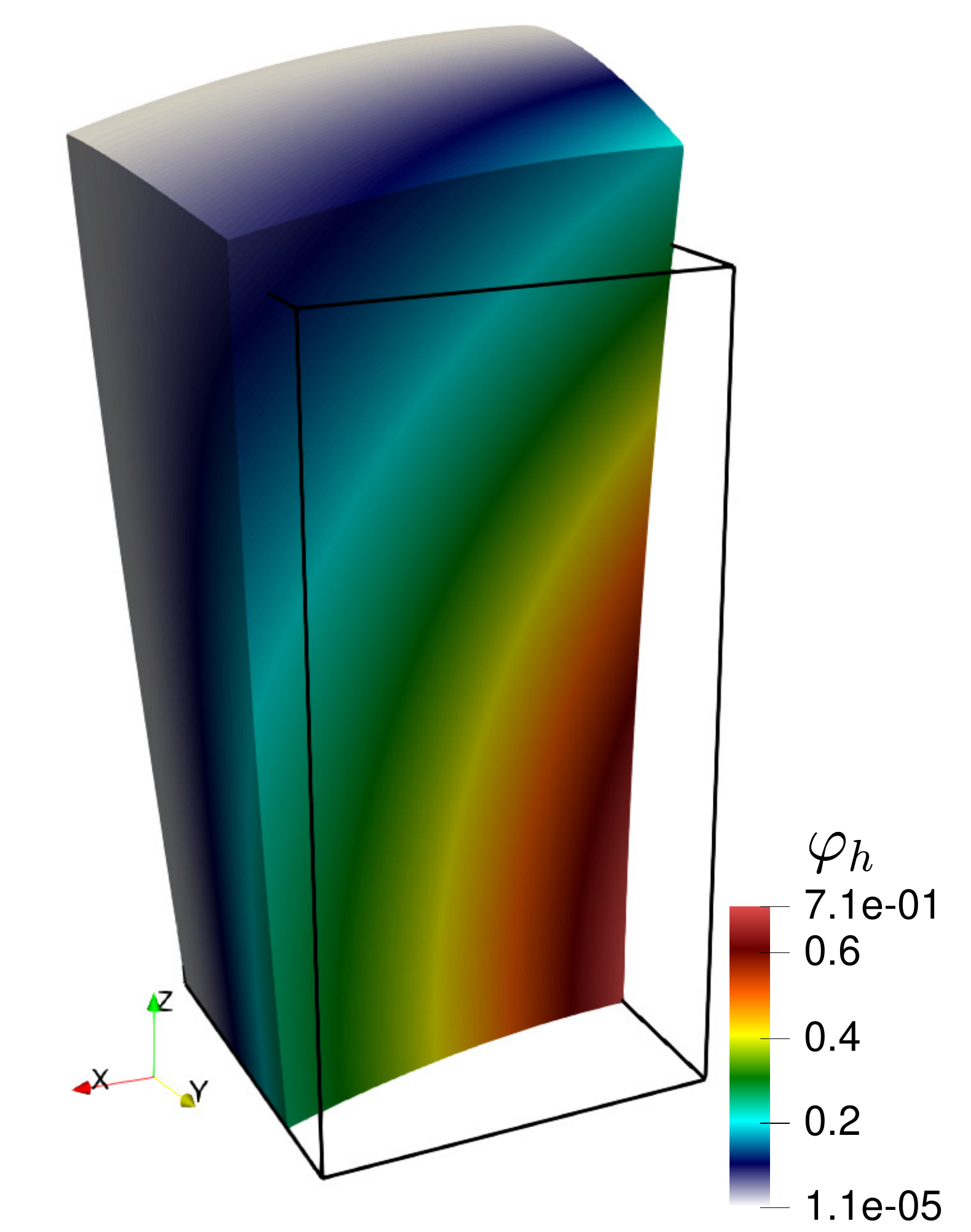}
    \includegraphics[width=0.325\linewidth]{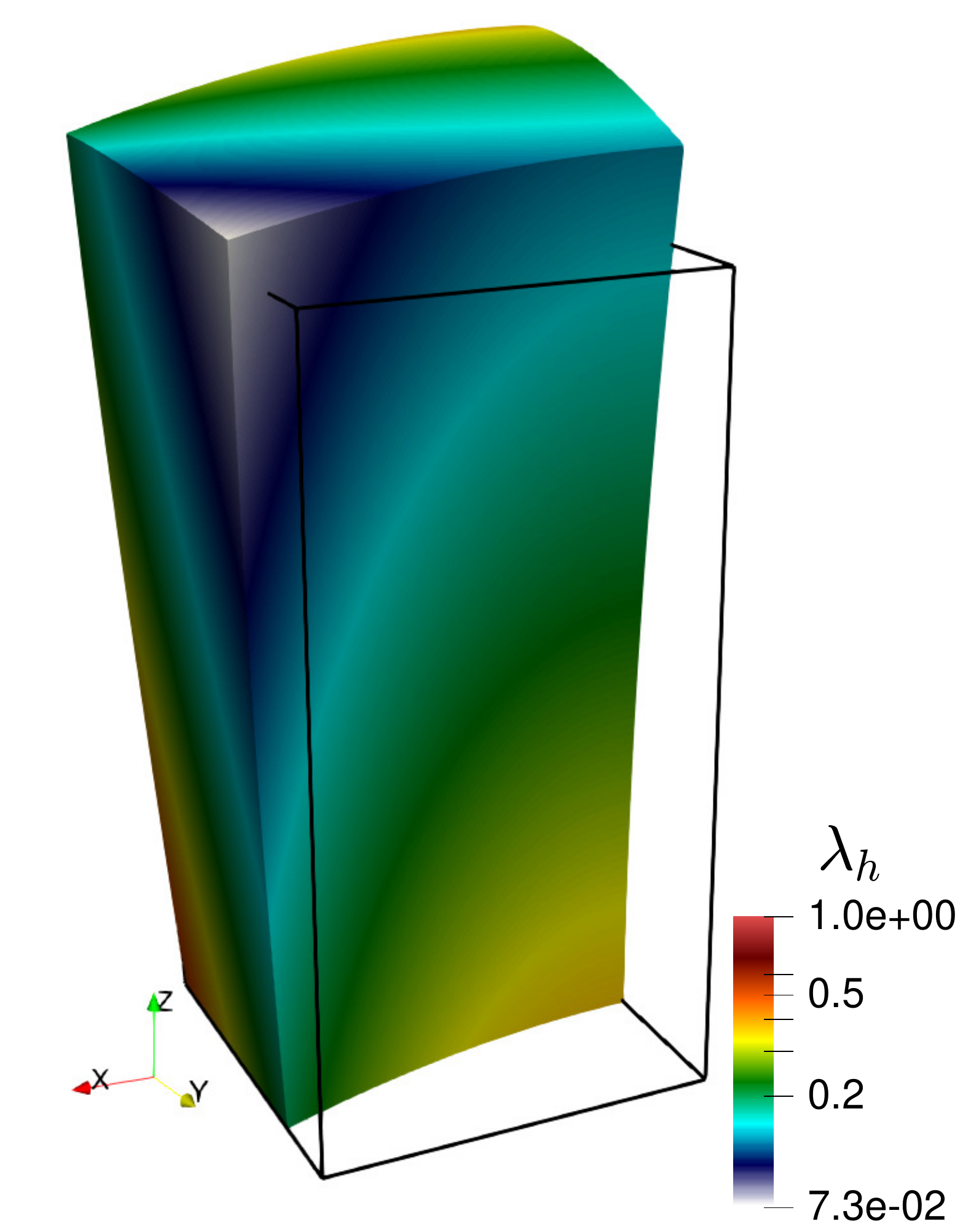}
    \caption{{Approximate solutions for the convergence test in the steady case, computed with the second-order scheme. The Lagrange multipliers and volume fraction are rendered on the deformed configuration,   showing   the outline of the reference domain.}}
    \label{fig:convergence-space}
\end{figure}

We also test the convergence in the $\rL^\infty(0,T; \bV^\bullet)$ norm for all quantities. We do this now in 2D for simplicity, with the domain $\Omega = (0,1)^2$ and time domain $(0,T)$ with (adimensional) final time $T=1$. The model parameters now are $\rho_0 = M = \hat{\lambda} = \hat{\mu} = \beta$, $\ten K = \ten I$ and the manufactured solutions are 
\begin{gather*}
\vec d(x,y) = \frac{t^2}{5}\begin{pmatrix}
    \sin(x)e^{-y} \\
   \cos(x)\sin(y)
\end{pmatrix}, \quad 
\mu(x,y) = t\sin(\pi x)\sin(\pi y),\\ 
\varphi(x,y) = \frac{t}{10}(\cos(\pi [x+y])^2+e^{x+y}), 
\quad 
\lambda(x,y) = \frac{t}{4} \cos(\pi x)\sin(\pi y).
\end{gather*}

We used centered differences to approximate the acceleration term $\ddot{\vec d}$ and backward Euler's scheme for $\dot{\varphi}$, with a constant time step $\tau = 10^{-2}$.  We  report the results obtained with the two lowest-order finite element schemes. Table \ref{table:convergence} confirms an optimal $O(h^{k+1})$ error decay observed for all the field variables, consistent with the theoretical estimates from Theorem \ref{th:cv}. In Figure \ref{fig:convergence} we show approximate solutions at the final time. 

\begin{figure}[t!]
    \centering
    \includegraphics[width=0.3\linewidth]{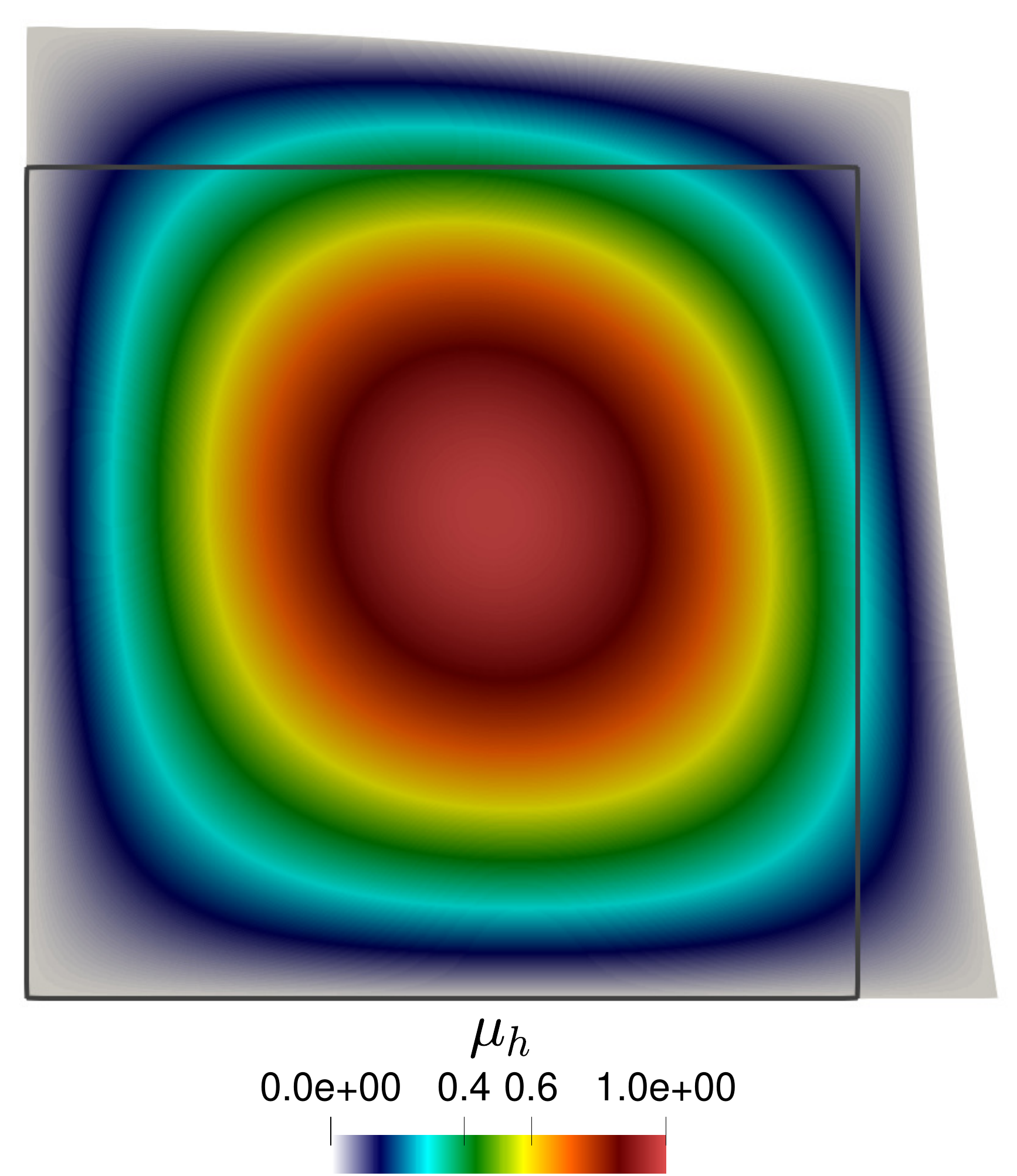}
    \includegraphics[width=0.3\linewidth]{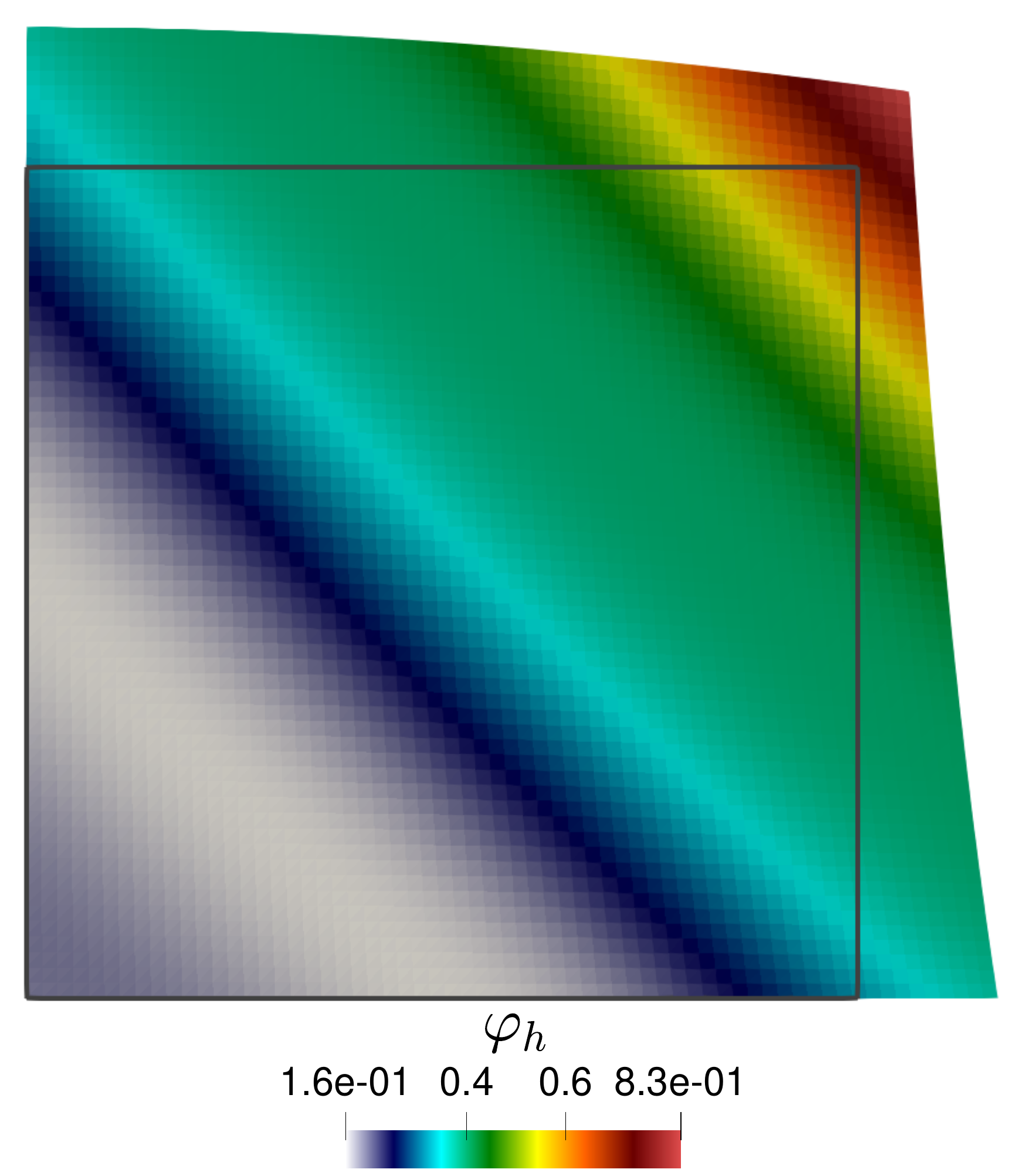}
    \includegraphics[width=0.3\linewidth]{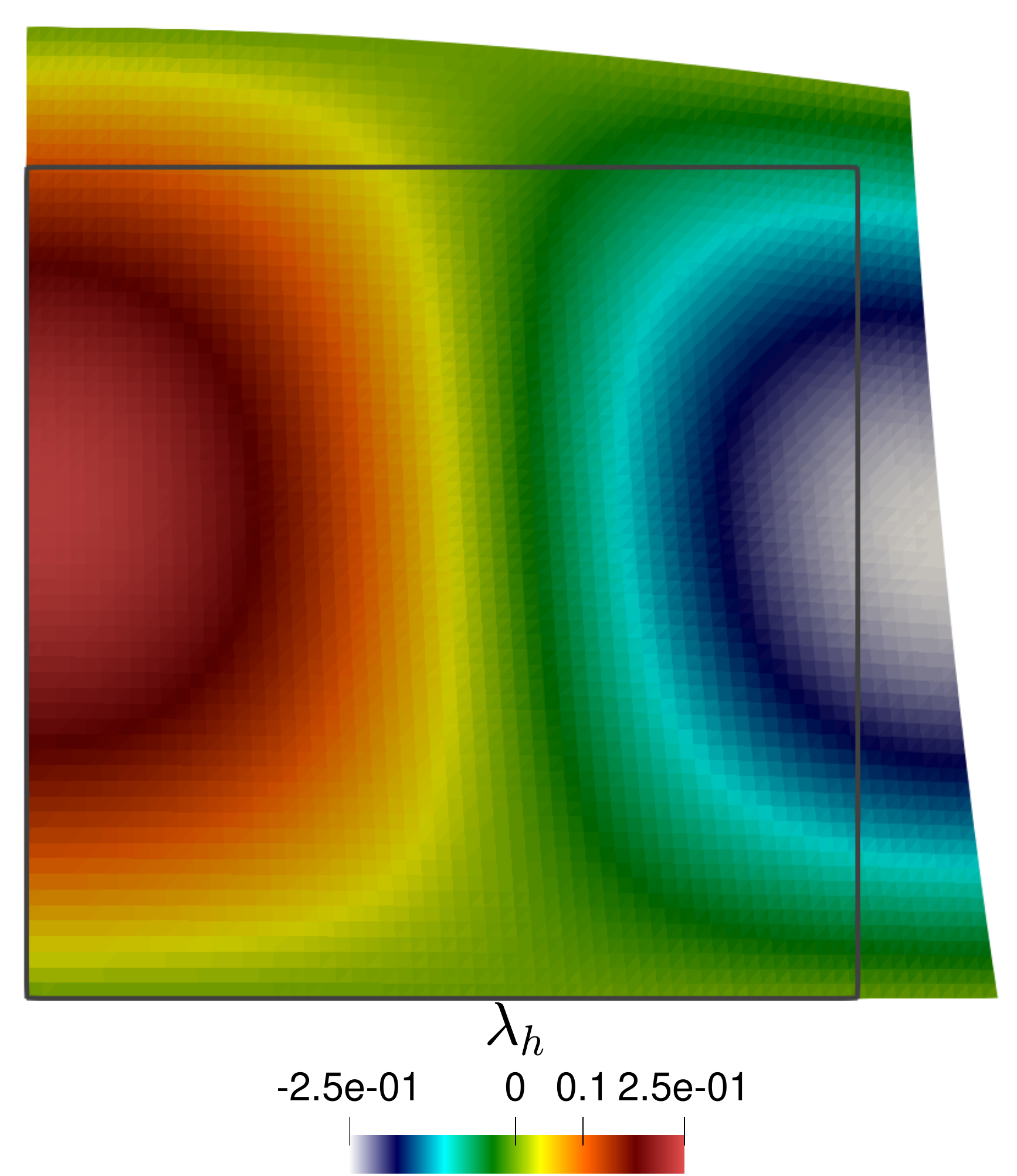}
    \caption{{Approximate solutions for the convergence test in the time-dependent case computed with the lowest order scheme, rendered on the deformed configuration at the final time $T=1$, showing also the outline of the reference domain.}}
    \label{fig:convergence}
\end{figure}

\begin{table}[t!]
    \centering
   { \begin{tabular}{r|c||c|c|c|c|c|c|c|c}
    \toprule
    \texttt{DoFs}     & $h$      & $e(\vec d)$ & \texttt{EoC}  & $e(\mu)$ & \texttt{EoC}  & $e(\varphi)$ & \texttt{EoC}  & $e(\lambda)$ & \texttt{EoC}  \\ 
    \midrule 
\multicolumn{10}{c}{$k=0$} \\
\midrule 
     43 & 0.7071 & 5.1e-02 & $\star$ & 1.5e+00 & $\star$ & 7.2e-02 & $\star$ & 2.3e-01 & $\star$ \\ 
   139 & 0.3536 & 2.7e-02 & 0.93 & 8.4e-01 & 0.85 & 3.9e-02 & 0.86 & 7.5e-02 & 1.60 \\
   499 & 0.1768 & 1.3e-02 & 1.05 & 4.3e-01 & 0.96 & 2.0e-02 & 0.98 & 2.5e-02 & 1.61 \\
  1891 & 0.0884 & 6.5e-03 & 1.02 & 2.2e-01 & 0.99 & 1.0e-02 & 1.00 & 9.7e-03 & 1.35 \\
  7363 & 0.0442 & 3.2e-03 & 1.01 & 1.1e-01 & 1.00 & 5.0e-03 & 1.00 & 4.4e-03 & 1.13 \\
 29059 & 0.0221 & 1.6e-03 & 1.00 & 5.5e-02 & 1.00 & 2.5e-03 & 1.00 & 2.2e-03 & 1.04 \\
\midrule 
\multicolumn{10}{c}{$k=1$} \\
\midrule 
   123 & 0.7071 & 6.0e-03 & $\star$ & 4.7e-01 & $\star$ & 2.9e-02 & $\star$ & 2.8e-02 & $\star$ \\
   435 & 0.3536 & 1.2e-03 & 2.32 & 1.3e-01 & 1.85 & 7.3e-03 & 1.98 & 5.6e-03 & 2.33 \\
  1635 & 0.1768 & 2.7e-04 & 2.16 & 3.3e-02 & 1.96 & 1.9e-03 & 1.92 & 1.3e-03 & 2.10 \\
  6339 & 0.0884 & 6.5e-05 & 2.04 & 8.4e-03 & 1.99 & 4.9e-04 & 1.98 & 3.2e-04 & 2.03 \\
 24963 & 0.0442 & 1.6e-05 & 2.01 & 2.1e-03 & 2.00 & 1.2e-04 & 2.00 & 8.0e-05 & 2.01 \\
 99075 & 0.0221 & 4.0e-06 & 2.00 & 5.3e-04 & 2.00 & 3.1e-05 & 2.00 & 2.0e-05 & 2.00 \\
   \bottomrule
    \end{tabular}}
    \caption{{Error history against smooth manufactured solutions in 2D, using the two lowest order methods $\mathbf{P}_{k+1}^\text{cont}-\mathrm{P}_{k+1}^\text{cont}-\mathrm{P}_k^\text{disc}-\mathrm{P}_k^\text{disc}$ for space discretization,  and centered difference and backward Euler schemes for time discretization with constant time step $\tau = 10^{-2}$.    We tabulate in $e(\bullet)$ the $\rL^\infty(\bV^\bullet)$ error of each variable,  and the corresponding experimental order of convergence (\texttt{EoC}).}}
    \label{table:convergence}
\end{table}

\subsection{Preconditioner validation}
We start by validating that the fixed-stress approximation yields an optimal preconditioner in the setting described at the beginning of Section~\ref{section:tests} in 3D. For this, we consider a lower Schur complement preconditioner with the forms proposed, where all local solves are done with a direct solver. This was implemented using the PETSc \cite{balay2019petsc} options displayed in Appendix~\ref{appendix:petsc-options}. We report the results in Table~\ref{tab:scalability_table}, where we note that the action of the preconditioner is indeed optimal. As the problem is non-symmetric, we used GMRES with a restart of 100, an absolute tolerance of $10^{-12}$ and a relative one of $10^{-8}$. The average number of GMRES iterations remains constant in all tested problem sizes.

\begin{table}[htbp]
    \centering
    \begin{filecontents*}{validation_results.csv}
Solver,DoFs,Avg_Iterations,Avg_Time
semi-direct,204,7.6,0.06940302848815919
semi-direct,1268,9.0,0.07067532539367676
semi-direct,9060,8.0,0.09290103912353516
semi-direct,68804,8.0,0.5300350666046143
semi-direct,536964,8.0,17.53404293060303
\end{filecontents*}
\pgfplotstabletypeset[
        col sep=comma,
        columns={DoFs, Avg_Iterations, Avg_Time}, 
        columns/DoFs/.style={
            column name=\textbf{\ac{dofs}}, 
            int detect,
            column type={r}
        },
        columns/Avg_Iterations/.style={
            column name=\textbf{Avg. Iterations}, 
            fixed, 
            fixed zerofill, 
            precision=1,
            column type={c}
        },
        columns/Avg_Time/.style={
            column name=\textbf{Avg. Time (s)}, 
            fixed, 
            fixed zerofill, 
            precision=3,
            column type={r}
        },
        every head row/.style={before row=\toprule, after row=\midrule},
        every last row/.style={after row=\bottomrule}
    ]{validation_results.csv}
    \vspace{0.2cm}
    \caption{Average linear iterations as a function of the \ac{dofs} during the first 5 time instants.}
    \label{tab:scalability_table}
\end{table}

To obtain an efficient preconditioner, we do not solve blocks exactly, but instead use for each block in the system an adequate preconditioner, so that each matrix inverse is replaced by the action of a preconditioner. The structure of the problem used in Section \ref{section:preconditioner} suggests a block partitioned strategy, which we formulate in the following way in PETSc (see Appendix~\ref{appendix:petsc-options} for the specific instructions): we use a lower Schur complement factorization for the physics-based partitioning given by $V^d$ and $V^\mu\times V^\varphi\times V^\lambda$. For the displacement block, we use the \ac{amg} implementation by HYPRE \cite{falgout2006design} with default parameters. For the second block, we use a block Jacobi preconditioner that separates the fields into the spaces $V^\mu$ and $V^\varphi\times V^\lambda$. For the diagonal operator on $V^\mu$, we use \ac{amg} because of the elliptic operator (the Laplacian), and for the other product block we use a simple Jacobi preconditioner because of the mass matrices. We used GMRES with a restart of 100, an absolute tolerance of $10^{-12}$ and a relative one of $10^{-8}$. The results are shown in Table~\ref{table:efficient}, where we see that the proposed preconditioner is optimal in the considered scenario.

\begin{table}[htbp]
    \centering
    \begin{filecontents*}{scalability_results.csv}
Solver,DoFs,Avg_Iterations,Avg_Time
scalable,1268,55.4,1.8411989212036133
scalable,9060,56.2,0.20388393402099608
scalable,68804,62.0,1.1518883228302002
scalable,536964,66.0,12.563676023483277
scalable,4244228,68.4,123.12406826019287
scalable,33752580,71.4,1096.5285108089447
\end{filecontents*}
\pgfplotstabletypeset[
        col sep=comma,
        columns={DoFs, Avg_Iterations, Avg_Time},
        columns/DoFs/.style={
            column name=\textbf{\ac{dofs}},
            int detect,
            column type={r}
        },
        columns/Avg_Iterations/.style={
            column name=\textbf{Avg. Iterations},
            fixed,
            fixed zerofill,
            precision=1,
            column type={c}
        },
        columns/Avg_Time/.style={
            column name=\textbf{Avg. Time (s)},
            fixed,
            fixed zerofill,
            precision=2,
            column type={r}
        },
        every head row/.style={before row=\toprule, after row=\midrule},
        every last row/.style={after row=\bottomrule}
    ]{scalability_results.csv}
    \vspace{0.2cm}
    \caption{Scalability results showing the degrees of freedom, average GMRES iterations, and average solution time per time step for the fully scalable preconditioner configuration.}
    \label{table:efficient}
\end{table}

\section{Discussion}\label{section:discussion}
In this work we formulate a linear poroelasticity model which accounts for solid phase incompressibility. One important feature of this model is that it is directly derived from a fully nonlinear poroelastic model, which yields a consistent formulation with both the porosity and the total pressure as variables, in addition to the expected displacement and Lagrange multiplier. This model presents a simple saddle-point structure which allows for a lowest order \ac{fe} approximation, with only a very mild inf-sup condition which requires $\rL^2$ functions to belong to related spaces. Finally, the presence of the variable $\mu$ as a total pressure suggests a connection to better established \emph{total-pressure} formulations in the context of Biot's equations, but a rigorous proof does not yet exist. 

In terms of finding a scalable solver for this model, we have leveraged the interpretation of the fixed-stress iterative method as an Uzawa method to obtain a robust preconditioner. The preconditioner obtained relies on a spectrally equivalent block for the Schur complement which is much easier to precondition. For this operator, we have derived efficient options to use for a block-partitioned preconditioner depending ultimately on \ac{amg} and Jacobi. The overall solution strategy was tested on a 3D benchmark problem, where its optimality and scalability were verified numerically.

For future work, the fixed-stress preconditioner has some well-established weaknesses depending on certain parameter regimes, for which it would be interesting to obtain a similar formulation based on the \emph{undrained} iterative scheme. An alternative route would be developing an operator preconditioner based on the continuous spaces. In addition, the solution strategy was tested only on the linearized problem, so it will be important to validate this strategy also on the fully nonlinear case, as well on other systems coupled with poromechanics (such as thermo-poroelasticity \cite{ge2026error}, stress-assisted diffusion, or interface poroelastic-fluidic problems \cite{bansal2026lagrange}).

\bibliography{main} 
\bibliographystyle{alpha}

\newpage
\appendix 

\section{PETSc parameters}\label{appendix:petsc-options}
In this section we specify the main parameter values that we pass to PETSc. These parameters control key aspects of the linear and nonlinear solvers, including the choice of preconditioners, convergence tolerances, and Krylov subspace methods. 

\medskip

	\begin{lstlisting}[language=python, frame=single, caption=PETSc commands for the validation test.]
      "mat_type": "nest",
      "ksp_atol": 1e-12,
      "ksp_rtol": 1e-8,
      "ksp_type": "gmres",
      "ksp_gmres_modifiedgramschmidt": None,
      "ksp_gmres_restart": 100,
      "pc_type": "fieldsplit",
      "pc_fieldsplit_type": "schur",
      "pc_fieldsplit_0_fields": "0",
      "pc_fieldsplit_1_fields": "1,2,3",
      "pc_fieldsplit_schur_fact_type": "full",
      "pc_fieldsplit_schur_precondition": "a11",
      "fieldsplit_0_ksp_type": "preonly",
      "fieldsplit_0_pc_type": "lu",
      "fieldsplit_1_ksp_type": "preonly",
      "fieldsplit_1_pc_type": "lu"
	\end{lstlisting}

	\begin{lstlisting}[language=python, frame=single, caption=PETSc commands for the scalable solver test.]
      "mat_type": "nest",
      "ksp_atol": 1e-12,
      "ksp_rtol": 1e-8,
      "ksp_type": "gmres",
      "ksp_gmres_modifiedgramschmidt": None,
      "ksp_gmres_restart": 100,
      "pc_type": "fieldsplit",
      "pc_fieldsplit_type": "schur",
      "pc_fieldsplit_0_fields": "0",
      "pc_fieldsplit_1_fields": "1,2,3",
      "pc_fieldsplit_schur_fact_type": "lower",
      "pc_fieldsplit_schur_precondition": "a11",
      "fieldsplit_0_ksp_type": "preonly",
      "fieldsplit_0_pc_type": "hypre",
      "fieldsplit_1": {
          "ksp_type": "preonly",
          "pc_type": "fieldsplit",
          "pc_fieldsplit_type": "additive",
          "pc_fieldsplit_0_fields": "0", 
          "pc_fieldsplit_1_fields": "1,2", 
          "fieldsplit_0_ksp_type": "preonly",
          "fieldsplit_0_pc_type": "hypre",
          "fieldsplit_1_ksp_type": "preonly",
          "fieldsplit_1_pc_type": "jacobi"}
	\end{lstlisting}

\end{document}